\patchcmd{\subequations}{{0}}{{-1}}{}{}       
\patchcmd{\subequations}{\alph}{.\arabic}{}{} 
\newtheorem{theorem}{Theorem}[section]
\newtheorem{lemma}[theorem]{Lemma}
\title{Virus Dynamics on $k$-Level Starlike Graphs}
\author{Akihiro Takigawa}
\address{Department of Applied Mathematics, University of Waterloo}
\email{atakigaw@uwaterloo.ca}
\author{Steven J. Miller}
\address{Department of Mathematics and Statistics, Williams College}
\email{sjm1@williams.edu}
\begin{document}

\maketitle

Becker, Greaves-Tunnell, Kontorovich, Miller, Ravikumar, and Shen determined the long term evolution of virus propagation behavior on a hub-and-spoke graph of one central node and $n$ neighbors, with edges only from the neighbors to the hub (a $2$-level starlike graph), under a variant of the discrete-time SIS (Suspectible Infected Suspectible) model. The behavior of this model is governed by the interactions between the infection and cure probabilities, along with the number $n$ of $2$-level nodes. They proved that for any $n$, there is a critical threshold relating these rates, below which the virus dies out, and above which the probabilistic dynamical system converges to a non-trivial steady state (the probability of infection for each category of node stabilizes). For $a$, the probability at any time step that an infected node is not cured, and $b$, the probability at any time step that an infected node infects its neighbors, the threshold for the virus to die out is $b \leq (1-a)/\sqrt{n}$. We extend this analysis to $k$-level starlike graphs for $k \geq 3$ (each $(k-1)$-level node has exactly $n_k$ neighbors, and the only edges added are from the $k$-level nodes) for infection rates above and below the critical threshold of $(1-a)/\sqrt{n_1+n_2+\dots+n_{k-1}}$. We do this by first analyzing the dynamics of nodes on each level of a $3$-level starlike graph, then show that the dynamics of the nodes of a $k$-level starlike graph are similar, enabling us to reduce our analysis to just $3$ levels, using the same methodology as the $3$-level case.

\tableofcontents

\hfill\\
\section{Introduction}
The problem of studying the node-states within an interconnected network of nodes is of great interest across many disciplines, with applications such as the modeling of distributed systems in computer science, the study of societal dynamics in sociology, as well as the study of epidemic and pandemic scenarios in epidemiology. For both infectious diseases and computer viruses, while the mechanism of transmission may be described in simple terms, how this interacts in a population or a computer network is a very complex problem that is difficult to comprehend. A model of virus propagation (often called an epidemiological model) uses microscopic descriptions of a virus, such as the role of an infectious node, to predict the large-scale behavior of the virus' spread throughout a network. In many scientific fields it is often possible to conduct experiments to test hypotheses and obtain data. However, experiments with infectious diseases in human populations, as well as computer viruses on public computer networks are dangerous and unethical, making this impossible in many situations. Mathematical models can be used instead to perform theoretical experiments. For example, epidemiological models have been used to compare the spread of different diseases such as measles, whooping cough and polio in London and Yorke \cite{ly}, Yorke et. al \cite{ynpm} and Anderson and May \cite{am}. Epidemiological models have also proven invaluable in the fight against the COVID-19 pandemic starting in December 2019, in studies such as \cite{cma}, \cite{aerra}, \cite{clcl}.  Epidemiological models can also be useful in answering difficult problems where both scientific and socioeconomic factors must be taken into account, such as finding optimal vaccination strategies while minimizing deaths and monetary cost in Longini et. al \cite{loacel}.

One of the most widely used classes of epidemiological models is derived from the SIR (Susceptible Infected Removed) model introduced by W. O. Kermack and A. G. McKendrick \cite{km} in 1927. This model assumes that any node in a given network are in one of three states-- susceptible (S), when a node is not infected but can be infected, infected (I), when a node is a carrier of the disease and can infect susceptible neighboring nodes, or removed (R), when a node can no longer be infected, either through immunity to the disease or another mechanism of removal from the network, such as death in the case that the network mimics a population. For infectious diseases, the basic SIR model was then further developed into two extensions called the SIR model with vital dynamics, where population dynamics such as the natural birth and death rate are taken into account, commonly used for diseases with slower spread, and the SIR model without vital dynamics where birth and death rates are not considered, commonly used for diseases that spread so fast that the natural birth and death rates of a population have a negligible effect on the trajectory of an epidemic. Finally, a third variant called the SIS (Susceptible Infected Susceptible) model which does not have the removed (R) state is commonly used to model diseases that do not confer long-lasting immunity, such as the common cold. This is the kind of model that we focus on in this paper. Over the years, numerous other extensions to the SIR model have been introduced. Its simple structure and its wide range of applications have enabled the SIR model to remain a popular choice almost a century after its introduction.

Y. Wang, C. Deepayan, C. Wang and C. Faloutsos \cite{wdwf} proposed a discrete-time SIS model which depends on local node interactions as a model which strongly mimics network topologies found in the real world. Many epidemiological models for infectious diseases assume homogeneous connectivity, where every individual has equal contact with everyone else in the population. This makes random graphs such as ones generated by Erdős–Rényi processes a suitable topology for many real world populations. However, many computer networks are reported to be scale-free, hence follow a power-law structure instead. The merit of this proposed model is that it makes no assumption of homogeneity nor the network topology, enabling it to be applicable in a wide range of scenarios. In this model, each node is either Susceptible (S) or Infected (I) at any time-step. A susceptible node is healthy, but at any time-step can be infected by its neighbors. On the other hand, at any time-step an infected node can also be cured and revert to the susceptible state. The model parameters are $\beta$, the probability at any time-step that an infected node infects its neighbors, and $\delta$, the probability at any time-step that an infected node is cured.\\

In studying this model, the following are the key questions posed. \
\begin{enumerate} [(1)]
\item
Given a set of model parameters and a particular initial state, does the system eventually reach a steady state?
\item
If the system does reach a steady state, what are the characteristics of that state?
\item
What is the rate of convergence of the system?
\end{enumerate}

For the model proposed by \cite{wdwf}, they gave a heuristic argument for a sufficient criterion for the node infection probabilities to converge to a trivial solution, so that the infection dies out. For star graphs with one central hub node connected to $n$ spoke nodes, this condition is $b \leq (1-a)/\sqrt{n}$, where $a \ = \ 1-\delta$ and $b \ = \ \beta$. Becker et al. \cite{bgkmrs} then gave a theoretical foundation to this argument by showing that the SIS model exhibits phase transition behavior, and that this threshold is both necessary and sufficient. Hence, below this threshold the virus dies out, and above the system converges to a non-trivial steady state independent of the steady state, given that the initial state is non-trivial.

\cite{bgkmrs} conjectured that similar behavior holds on starlike graphs with more ``levels''. Each additional level of a starlike graph is an additional layer of spoke nodes connected to each of the previous level of spoke nodes. For example, the star graph as described above is considered a $2$-level starlike graph because it has one central hub node (level $1$) connected to $n$ spoke nodes (level $2$). A $3$-level starlike graph has one central hub node (level $1$) connected to $n_1$ spoke nodes (level $2$), and each of the $n_1$ spoke nodes are connected to $n_2$ additional spoke nodes (level $3$).

\begin{figure}[hb]
\includegraphics[width=0.5\linewidth]{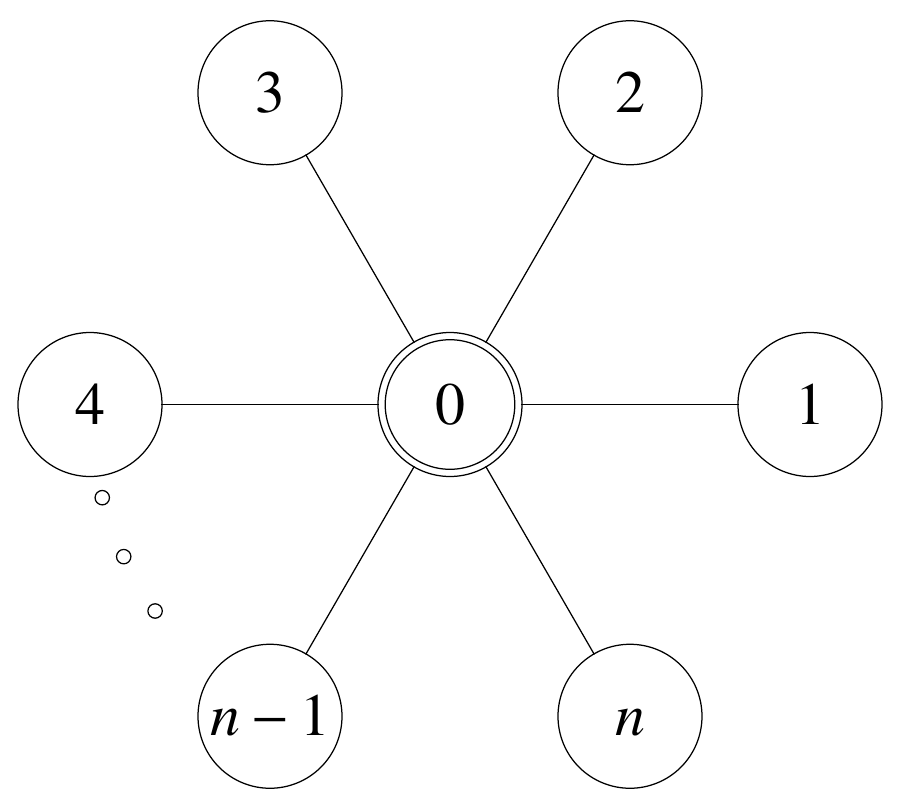}
\caption{A $2$-level starlike graph.}
\end{figure}

Starlike graphs mimic many real-world network topologies. For example, it can represent a metropolitan area with one central population hub and a number of dependent areas when modeling the spread of infectious diseases. In computer science, starlike networks are common in distributed systems, allowing us to model the spread of computer viruses. Hence the application of the proposed model to starlike graphs is particularly interesting to analyze.

We provide a theoretical basis for the existence of a critical threshold for virus propagation on starlike graphs. We first show this on the $3$-level starlike graph, and then proceed to naturally extend the same ideas to starlike graphs with an arbitrary number of levels-- a $k$-level starlike graph. Namely, we show for a $k$-level starlike graph, a critical threshold of $b \leq (1-a)/\sqrt{n_1+\dots+n_{k-1}}$ exists, below which the virus dies out. \\

\begin{subequations}
\section{Preliminaries}
\subsection{The Proposed Model}
\hfill\\

\cite{wdwf} proposed the following propagation model. Denote by $\beta$, the probability at any time-step that an infected node infects its neighbors, and by $\delta$, the probability at any time-step that an infected node is cured.

If $p_{i,t}$ is the probability a node $i$ is infected at time $t$, the SIS model is governed by the following equation:
\begin{equation}
1-p_{i,t}  \ = \ (1-p_{i,t-1})\zeta_{i,t}+\delta p_{i,t}\zeta_{i,t}, \label{model_equation}
\end{equation}
where $\zeta_{i,t}$ is the probability that a node $i$ is not infected by its neighbors at time $t$. We can write $\zeta_{i,t}$ as follows:
\begin{equation}
\zeta_{i,t} \ = \ \prod_{j \sim i} p_{j,t-1}(1-\beta)+(1-p_{j,t-1}) \ = \ \prod_{j \sim i} (1-\beta p_{j,t-1}), \label{zeta_equation}
\end{equation}
where $j \sim i$ means $i$ and $j$ are neighbors. A node $i$ is assumed to be healthy at time $t$ if it satisfies one of the following conditions:
\begin{itemize}
\item
$i$ was healthy at time $t-1$ or prior, and is not infected by its neighbors at time $t$ as governed by $\zeta_{i,t}$, or
\item
$i$ was infected at time $t-1$ or prior, but was cured at time $t$, and is not infected by its neighbors at time $t$ as governed by $\zeta_{i,t}$.
\end{itemize}

\subsection{Basic Result on the Infection Rates for Nodes on Common Levels}
\hfill\\

We start by stating a crucial observation that makes a theoretical investigation of our dynamical system feasible. We consider graphs with a starlike graph topology, as described in the introduction. Suppose the graph has $n_1$ $2$-level nodes, $n_2$ $3$-level nodes, $\dots$ , $n_{k-1}$ $k$-level nodes, along with one central hub node. The central hub node is numbered $0$, for a total of $n_1+\dots+n_{k-1}+1$ nodes.
\begin{lemma} \label{common_behavior_lemma}
For any initial configuration, as time evolves all nodes on the same level converge to a common behavior.
\end{lemma}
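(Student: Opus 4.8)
The plan is to make the one–step dynamics explicit and then exploit the symmetry of the tree together with the monotonicity and contractivity of the update. Solving \eqref{model_equation} for $p_{i,t}$ gives an explicit update $p_{i,t}=f(p_{i,t-1},\zeta_{i,t})$ with $f(x,z)=(1-(1-x)z)/(1+\delta z)$, and I would record three elementary properties of $f$ on $[0,1]^2$: it is increasing in $x$ with $\partial_x f=z/(1+\delta z)\le 1/(1+\delta)<1$; it is decreasing in $z$, since $\partial_z f=-((1-x)+\delta)/(1+\delta z)^2<0$; and, because $\zeta_{i,t}$ in \eqref{zeta_equation} is decreasing in each neighbour probability, the full map $P_{t-1}\mapsto P_t=:F(P_{t-1})$ is monotone in the coordinatewise order. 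The uniform bound $\partial_x f\le 1/(1+\delta)$ is the engine of the argument: a node's dependence on its own previous state is a strict contraction.

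Next I would isolate the base case, which is exactly the two–level phenomenon of \cite{bgkmrs}. Every leaf (level $k$) has a single neighbour, its parent, so two leaves sharing a parent have identical $\zeta_{i,t}$ and hence $p_{i,t}=g_t(p_{i,t-1})$ for a common map $g_t$ of Lipschitz constant $\le 1/(1+\delta)$; their discrepancy therefore decays geometrically. To upgrade this from ``leaves with a common parent'' to ``all nodes on a level,'' I would use equivariance: any level–preserving automorphism $\sigma$ of the tree commutes with the update, $F(\sigma\cdot P)=\sigma\cdot F(P)$. Consequently symmetric initial data (all nodes on each level equal) stay symmetric for all time, and the induced dynamics collapse to a $k$–dimensional reduced operator $\Phi$ on the level–values $(q^{(1)},\dots,q^{(k)})$, where a level-$\ell$ node sees one parent and $n_\ell$ children, giving $\zeta=(1-\beta q^{(\ell-1)})(1-\beta q^{(\ell+1)})^{n_\ell}$ with the obvious modifications at the hub and the leaves.

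For a general, non–symmetric initial configuration I would track, for each level $\ell$, the extremes $M^{(\ell)}_t=\max_i p_{i,t}$ and $m^{(\ell)}_t=\min_i p_{i,t}$ over the nodes of that level. Because $F$ is monotone and a level-$\ell$ node's neighbours lie only on levels $\ell\pm1$, bounding each neighbour by the extreme value of its own level yields, coordinatewise, $M_t\le \Phi(M_{t-1})$ and $m_t\ge \Phi(m_{t-1})$, where $M_t:=(M^{(\ell)}_t)_\ell$ and $m_t:=(m^{(\ell)}_t)_\ell$; that is, $M_t$ is a supersolution and $m_t$ a subsolution of the reduced system. Subtracting, the vector of per–level spreads obeys $M_t-m_t\le \Phi(M_{t-1})-\Phi(m_{t-1})$, which the mean value theorem controls by $D\Phi$ applied to $M_{t-1}-m_{t-1}$. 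This reduces the lemma to the single statement that the vector of spreads contracts to $0$ under iteration of $\Phi$, and it legitimizes the paper's strategy of studying only the $k$–dimensional system $\Phi$.

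The main obstacle is exactly here, and it is where the phase transition enters. The diagonal of $D\Phi$ is the contracting self–term $\le 1/(1+\delta)$, but the off–diagonal child entries carry a factor proportional to $n_\ell\beta$, so $\Phi$ is a uniform contraction only below the critical threshold $b\le (1-a)/\sqrt{n_1+\dots+n_{k-1}}$; in that regime the spreads decay geometrically and the lemma is immediate, the common behaviour being extinction. Above the threshold $\Phi$ is not globally contracting, and I would instead invoke the quantitative convergence of the symmetric system to its non–trivial fixed point established in the sequel: once the sandwiching trajectories $M_t$ and $m_t$ enter a neighbourhood of that fixed point on which $D\Phi$ is contracting, the intermediate spreads are pulled to $0$. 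In both regimes all nodes on a common level converge to a common behaviour, as claimed.
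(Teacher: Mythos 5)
Your subcritical argument is fine, but the supercritical branch has a genuine gap. The sequences $M_t$ and $m_t$ are not orbits of the reduced map $\Phi$: they satisfy only the one-sided inequalities $M_t\le\Phi(M_{t-1})$ and $m_t\ge\Phi(m_{t-1})$, so all you can extract is $m_t\ge\Phi^t(m_0)$. For the canonical nontrivial initial configuration --- a single infected node, everything else healthy --- every level contains a healthy node, so $m_0=0$, the lower bound degenerates to $m_t\ge 0$ for all $t$, and meanwhile $M_t$ heads toward the nontrivial fixed point $d_f$. Your spread estimate then stalls near $d_f\neq 0$, and the assertion that ``the sandwiching trajectories $M_t$ and $m_t$ enter a neighbourhood of that fixed point on which $D\Phi$ is contracting'' is precisely the missing step, not a consequence of anything you established. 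The gap is repairable: positivity propagates along the tree (a healthy node with an infected neighbour has $p_{i,t}\ge 1-\prod_{j\sim i}(1-bp_{j,t-1})>0$), so after $T$ steps, $T$ the diameter, all coordinates exceed some $\epsilon>0$; then $m_{T+t}\ge\Phi^t(\epsilon\mathbf{1})$ and $M_{T+t}\le\Phi^t(M_T)$, and both $\Phi$-orbits converge to $d_f$ by the supercritical convergence result (Theorem II(b)), squeezing the spread to zero. But note what that patch costs: it makes the paper's very first lemma depend on a convergence theorem proved (in fact only sketched) much later --- not formally circular, since II(b) concerns the finite-dimensional map alone, but an inversion of the paper's logical architecture that you would have to flag explicitly.

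The comparison with the paper is instructive. The paper's proof is one line and threshold-free: for two nodes $i,k$ on a level it factors $p_{i,t}-p_{k,t}=a\,(p_{i,t-1}-p_{k,t-1})\prod_j(1-bp_{j,t-1})$, so the difference contracts by $a=1-\delta<1$ per step for \emph{all} parameter values, with no case split. That factoring, however, silently assumes $i$ and $k$ see identical neighbour values $p_{j,t-1}$ --- true for leaves sharing a parent, or for level-symmetric data, but not for arbitrary initial configurations --- which is exactly the subtlety you spotted; and your observation that crude difference propagation yields a contraction factor of roughly $a+b\rho$ (with $\rho$ the reduced adjacency radius) correctly explains why no naive Lipschitz repair of the paper's computation can work above threshold. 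So your diagnosis of the difficulty is better than the paper's treatment of it, but as submitted your proof only establishes the lemma for $b\le(1-a)/\sqrt{n_1+\cdots+n_{k-1}}$. One minor misalignment: you solved \eqref{model_equation} literally, obtaining $f(x,z)=(1-(1-x)z)/(1+\delta z)$, whereas the paper's proof and everything downstream use $p_{i,t}=1-(1-ap_{i,t-1})\zeta_{i,t}$, i.e., the cure term evaluated at time $t-1$ (the $\delta p_{i,t}$ in \eqref{model_equation} appears to be a typo); this does not affect your structural claims, since both updates are monotone in the neighbours and self-contracting, but you should work with the map the paper actually analyzes.
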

\begin{proof}
Let $a = 1-\delta$ and $b = \beta$. From (\ref{model_equation}) and (\ref{zeta_equation}), we get
\begin{align}
p_{i,t} & \ = \ (1-p_{i,t-1})\prod_{j \sim i}(1-\beta p_{j,t-1})+\delta p_{i,t}\prod_{j \sim i}(1-\beta p_{j,t-1}) \nonumber \\
& \ = \ 1-(1-ap_{i,t-1})\prod_{j \sim i}(1-bp_{j,t-1}).
\end{align}
Then, the nodes at each level become the following.
\paragraph{Start Node (level 1)}
We have
\begin{equation}
p_{0,t}  \ = \  1-(1-ap_{0,t-1})\prod_{j \ = \ 1}^{n_1}(1-bp_{j,t-1}).
\end{equation}
The claim is trivial for the start node, as there is only one start node.
\paragraph{Middle Nodes (levels $2$ through $k-1$)}
For a $m$-level spoke node $i$,
\begin{equation}
p_{i,t}  \ = \  1-(1-ap_{i,t-1})\prod_{j \ = \ 1}^{n_{m+1}+1}(1-bp_{j,t-1}),
\end{equation}
where the nodes $j$ are the $n_{m+1}$ $m+1$-level neighbors of node $i$, in addition to the single $m-1$-level node adjacent to node $i$.
We prove that all the spokes at level $m$ assume identical values by showing that for two nodes $i$ and $k$ on level $m$, $|p_{i,t} - p_{k,t}| \to 0$ as $t \to \infty$. We have
\begin{align}
p_{i,t} - p_{k,t} & \ = \  (p_{i,t-1}-p_{k,t-1}) a\prod_{j \ = \ 1}^{n_{m+1}+1}(1-bp_{j,t-1}) \nonumber \\
& \ = \  (p_{i,t-1}-p_{k,t-1}) (1-\delta)\prod_{j \ = \ 1}^{n_{m+1}+1}(1-\beta p_{j,t-1}).
\end{align}
Thus
\begin{equation}
|p_{i,t} - p_{k,t}|  \ = \  (p_{i,0}-p_{k,0}) (1-\delta)^t\prod_{j \ = \ 1}^{n_{m+1}+1}(1- \beta p_{j,t-1})^t.
\end{equation}
As we assume a non-trivial initial configuration, $\beta, \delta \in (0,1)$. Then, we get
\begin{equation}
\lim_{t \to \infty} |p_{i,t} - p_{k,t}| \ = \ \lim_{t \to \infty} (p_{i,0}-p_{k,0}) (1-\delta)^t\prod_{j \ = \ 1}^{n_{m+1}+1}(1- \beta p_{j,t-1})^t \ = \ 0.
\end{equation}
\paragraph{Final Nodes (level $k$)}
For a $k$-level spoke $i$,
\begin{equation}
p_{i,t}  \ = \  1-(1-ap_{i,t-1})(1-bp_{j,t-1})
\end{equation}
(where $j$ is the spoke node at level $k-1$ that $i$ is connected to). We prove that all the spokes at level $k$ assume identical values by showing that for two nodes $i$ and $k$ on level $m$, $|p_{i,t} - p_{k,t}| \to 0$ as $t \to \infty$. We have
\begin{align}
p_{i,t} - p_{k,t} & \ = \  (p_{i,t-1}-p_{k,t-1})  \cdot a(1-bp_{j,t-1}) \nonumber \\
& \ = \  (p_{i,t-1}-p_{k,t-1}) (1-\delta)(1-\beta p_{j,t-1}).
\end{align}
Thus
\begin{equation}
|p_{i,t} - p_{k,t}|  \ = \  (p_{i,0}-p_{k,0}) (1-\delta)^t(1- \beta p_{j,t-1})^t.
\end{equation}
As we assume a non-trivial initial configuration, $\beta, \delta \in (0,1)$. Then, we get
\begin{equation}
\lim_{t \to \infty} |p_{i,t} - p_{k,t}| \ = \ \lim_{t \to \infty} (p_{i,0}-p_{k,0}) (1-\delta)^t (1- \beta p_{j,t-1})^t \ = \ 0.
\end{equation}
\end{proof}
This observation allows us to simplify our model to a model in terms of the infection probabilities of a node on each level. We begin with a $3$-level starlike graph model, which we later generalize to the $k$-level case. Let $x_t$ be the probability that the hub is infected at time $t$, $y_t$ be the probability that a $2$-level spoke node is infected at time $t$, and $z_t$ be the probability that a $3$-level spoke node is infected at time $t$. These then evolve according to
\begin{equation}
\begin{pmatrix}
x_{t+1} \\
y_{t+1} \\
z_{t+1}
\end{pmatrix}  \ = \  F
\begin{pmatrix}
x_t \\
y_t \\
z_t
\end{pmatrix}
\end{equation}
where
\begin{equation}
F\begin{pmatrix}
x \\
y \\
z
\end{pmatrix} \ = \ \begin{pmatrix}
f_1(x,y,z) \\
f_2(x,y,z)\\
f_3(x,y,z)
\end{pmatrix} \ = \ \begin{pmatrix}
1-(1-ax)(1-by)^{n_1}\\
1-(1-ay)(1-bx)(1-bz)^{n_2}\\
1-(1-az)(1-by)
\end{pmatrix}.
\end{equation}
This map simplifies our model greatly, and is the foundation of our investigation. In the next section, we present our main result for $3$-level starlike graphs, which provides a theoretical basis for the critical threshold proposed by \cite{wdwf} on $3$-level starlike graphs. We then present an extension of this to starlike graphs with more than $3$ levels.

\hfill\\
\section{A Theoretical Foundation on $3$-level Starlike Graphs}

\subsection{The Method of Fixed Points}
\hfill\\

In exploring the question of whether a dynamical system reaches a steady-state, finding the fixed points of the system is important. Given a map $F: [0,1] \to [0,1]$, a fixed point is a point $(x,y,z)$ such that $F(x,y,z)  \ = \  (x,y,z)$. If it can be shown that successive iterations of a dynamical system approaches a fixed point, then we say that the dynamical system reaches a steady-state. For $3$-level starlike graphs, these two problems were thoroughly investigated. and the following is one of our main results.

\begin{theorem} \label{3-level_theorem}
\begin{enumerate}[I.]
\item
If $b \leq (1-a)/\sqrt{n_1+n_2}$, then
\begin{enumerate}[(a)]
\item
the unique fixed point of $F$ is $(0,0,0)$, and
\item
the system converges to this fixed point, in other words, the virus dies out.
\end{enumerate}
\item
If $b > (1-a)/\sqrt{n_1+n_2}$, then
\begin{enumerate}[(a)]
\item
$F$ has a unique, non-trivial fixed point $(x_f,y_f,z_f)$, and
\item
the system converges to this non-trivial fixed point.
\end{enumerate}
\end{enumerate}
\end{theorem}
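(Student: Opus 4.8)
The plan is to treat $F$ as a monotone dynamical system on the cube $[0,1]^3$ and to read off the phase transition from the linearization at the origin, using Perron--Frobenius theory and the order structure rather than any explicit solution of the fixed-point equations. First I would record the structural facts. One checks directly that $F$ maps $[0,1]^3$ into itself (each coordinate is $1$ minus a product of factors lying in $[0,1]$), that $F(\mathbf 0)=\mathbf 0$, and that every partial derivative of every $f_i$ is nonnegative on the cube, so $F$ is coordinatewise nondecreasing (order-preserving). Because the underlying graph is connected, positivity propagates: if $v_0\neq\mathbf 0$ then $F^t(v_0)\gg\mathbf 0$ after at most two steps. The Jacobian at the origin is
\[ J \;=\; \begin{pmatrix} a & n_1 b & 0 \\ b & a & n_2 b \\ 0 & b & a \end{pmatrix} \;=\; aI + bM, \]
and since $M$ has characteristic polynomial $-\lambda\bigl(\lambda^2-(n_1+n_2)\bigr)$, the eigenvalues of $J$ are $a$ and $a\pm b\sqrt{n_1+n_2}$. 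Hence the spectral radius is $\rho(J)=a+b\sqrt{n_1+n_2}$, and the threshold condition is exactly $\rho(J)\le 1$ versus $\rho(J)>1$.

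For Part I, I would prove (a) first. Bernoulli's inequality together with the discarded positive cross terms gives the coordinatewise bound $F(v)\le Jv$ on the cube, which is \emph{strict} in every coordinate when $v\gg\mathbf 0$. If a nontrivial fixed point existed it would satisfy $v\gg\mathbf 0$ (positivity propagation) and hence $v=F(v)< Jv$ coordinatewise; the Collatz--Wielandt characterization then forces $\rho(J)>1$, contradicting $\rho(J)\le1$. Thus $\mathbf 0$ is the unique fixed point throughout Part I, including the boundary case $b=(1-a)/\sqrt{n_1+n_2}$. For (b), monotonicity gives the sandwich $\mathbf 0=F^t(\mathbf 0)\le F^t(v_0)\le F^t(\mathbf 1)$; the upper sequence $F^t(\mathbf 1)$ is nonincreasing and bounded below, so it converges to a fixed point, which must be $\mathbf 0$, and the virus dies out.

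For Part II I would establish existence of a nontrivial fixed point from the Perron eigenvector $w\gg\mathbf 0$ of $J$: since $F(v)=Jv+o(|v|)$ with $\rho(J)>1$, the point $\epsilon w$ satisfies $F(\epsilon w)\gg\epsilon w$ for small $\epsilon$, so the increasing sequence $F^t(\epsilon w)$ converges to a fixed point $v_f\gg\mathbf 0$. Uniqueness (and then convergence) rests on the key structural claim that $F$ is \emph{strictly subhomogeneous} on the interior, meaning $F(\lambda v)\gg\lambda F(v)$ for every $\lambda\in(0,1)$ and $v\gg\mathbf 0$. Granting this, the standard comparison argument via $\lambda^*=\sup\{\lambda:\lambda u\le v\}$ shows two interior fixed points must coincide, giving uniqueness; and the sandwich $F^t(\epsilon v_f)\uparrow v_f$ (lower bound, where subhomogeneity makes $\epsilon v_f$ a subsolution) against $F^t(\mathbf 1)\downarrow v_f$ (upper bound) squeezes every trajectory once it has become positive, yielding global convergence to $v_f$.

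The main obstacle is the strict subhomogeneity claim. I would reduce it to a one-variable statement: restricting any $f_i$ to the ray $\lambda\mapsto f_i(\lambda v)$ produces a function of the form $1-P(\lambda)$ with $P(\lambda)=\prod_k(1-c_k\lambda)^{m_k}$, $c_k>0$, and I must show $1-P(\lambda)>\lambda\bigl(1-P(1)\bigr)$ on $(0,1)$. Writing $Q(\lambda)=(1-\lambda)-P(\lambda)+\lambda P(1)$ one has $Q(0)=Q(1)=0$, so it suffices that $Q$ be strictly concave, i.e.\ that $P$ be strictly convex. A short computation gives $P''=P\,(S^2-S')$ with $S=\sum_k m_k c_k/(1-c_k\lambda)$, and the required inequality $S^2\ge S'$ follows from
\[ S^2-S' \;=\; \sum_k m_k(m_k-1)u_k^2 \,+\, \sum_{k\neq l} m_k m_l u_k u_l \;\ge\; 0, \qquad u_k=\frac{c_k}{1-c_k\lambda}, \]
which is nonnegative precisely because every exponent satisfies $m_k\ge1$ (the exponents here are $1$, $n_1$, and $n_2$, all at least $1$), and is strictly positive when $v\gg\mathbf 0$. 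This single convexity fact is what drives both the boundary uniqueness in Part I and the uniqueness in Part II, so verifying it carefully for each of $f_1,f_2,f_3$ is the crux of the whole argument.
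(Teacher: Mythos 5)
Your proposal is correct, but it takes a genuinely different route from the paper. The paper determines the fixed points by solving for the partial-fixed-point surfaces $\phi_1,\phi_2,\phi_3$, composing them into two planar curves $x=\phi_1(y)$ and $x=\phi_{2,3}(y)$, and counting intersections via convex-versus-concave lemmas (Lemmas \ref{bgkmrs_lemma} and \ref{bgkmrs_extended_lemma}); convergence below threshold is then proved by showing the outer region cut out by these surfaces (``Region I'') is forward-invariant with strictly decreasing iterates, and squeezing an arbitrary point inside a cuboid whose far corner lies in that region. You instead treat $F$ as a monotone map on the cube and read the phase transition off the linearization: your computation of $J=aI+bM$ and its spectrum is correct (the eigenvalues are $a$ and $a\pm b\sqrt{n_1+n_2}$, so the hypothesis is exactly $\rho(J)\le 1$ versus $\rho(J)>1$), the Bernoulli domination $F(v)\le Jv$ with strictness from the discarded cross terms is valid and correctly handles the boundary case $b=(1-a)/\sqrt{n_1+n_2}$, and your ray-wise subhomogeneity computation $P''=P(S^2-S')>0$ checks out for each of $f_1,f_2,f_3$ since every $f_i$ has at least two distinct factors, making the cross terms strictly positive when $v\gg\mathbf 0$. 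What each approach buys: the paper's argument is elementary and self-contained (no Perron--Frobenius or monotone-systems machinery) and exhibits the nontrivial fixed point concretely as an intersection of explicit curves, but its region bookkeeping is lengthy and II(b) is only asserted to be ``analogous''; yours is shorter and more conceptual, actually proves II(b) in full (the sandwich $F^t(\epsilon v_f)\uparrow v_f$ against $F^t(\mathbf 1)\downarrow v_f$ after positivity propagation), replaces the paper's Region I lemmas with the simple observation that the monotone limit of $F^t(\mathbf 1)$ must be a fixed point and hence $\mathbf 0$ by part I(a), and generalizes immediately beyond starlike graphs to any connected graph with threshold $b\,\rho(A)\le 1-a$ --- in particular the $k$-level extension of Section 4 would follow with $M$ replaced by the collapsed adjacency matrix, with no need for the composition-of-$\phi$'s concavity lemmas. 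Note also that the coordinatewise monotonicity of $F$ you invoke is exactly what the paper's cuboid induction proves by hand, so the two convergence proofs share that engine; the only misstatement in your writeup is the closing claim that the subhomogeneity/convexity fact drives Part I as well --- your Part I actually rests on the Bernoulli bound and Collatz--Wielandt, not on subhomogeneity --- but this is a stray remark, not a gap.
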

The proof of this theorem is given over the next few subsections. In the next subsection, we prove parts I(a) and II(a) by determining the fixed points of $F$. Using convexity arguments, we show that the trivial fixed point is the only fixed point if $b \leq (1-a) / \sqrt{n_1+n_2}$, but there is a unique, additional fixed point for larger $b$. In order to illustrate the method used to show convergence to the fixed points, we then give a proof of I(b), namely that for $b \leq (1-a)/\sqrt{n_1+n_2}$, all initial configurations evolve to the trivial fixed point.
\end{subequations}

\subsection{Determination of Fixed Points of $F$}
\hfill\\

\begin{subequations}
We determine the behavior of fixed points of $F$ as a function of $a$, $b$, $n_1$ and $n_2$, thus proving Theorem \ref{3-level_theorem} I(a) and II(a). To do this, we look for partial fixed points, namely points where one of the $x$, $y$, or $z$-coordinates are invariant under $F$. These points are defined by the functions
\begin{align}
\phi_1(y,z) &\ = \ x \ = \ \frac{1-(1-by)^{n_1}}{1-a(1-by)^{n_1}}, \\
\phi_2(x,z) &\ = \ y \ = \ \frac{1-(1-bx)(1-bx)^{n_2}}{1-a(1-bx)(1-bx)^{n_2}}, \\
\phi_3(z,y) &\ = \ z \ = \ \frac{by}{1-a+aby},
\end{align}
whose intersections are the fixed points of $F$.

\begin{figure}[!htb]
\minipage{0.32\textwidth}
  \includegraphics[width=\linewidth]{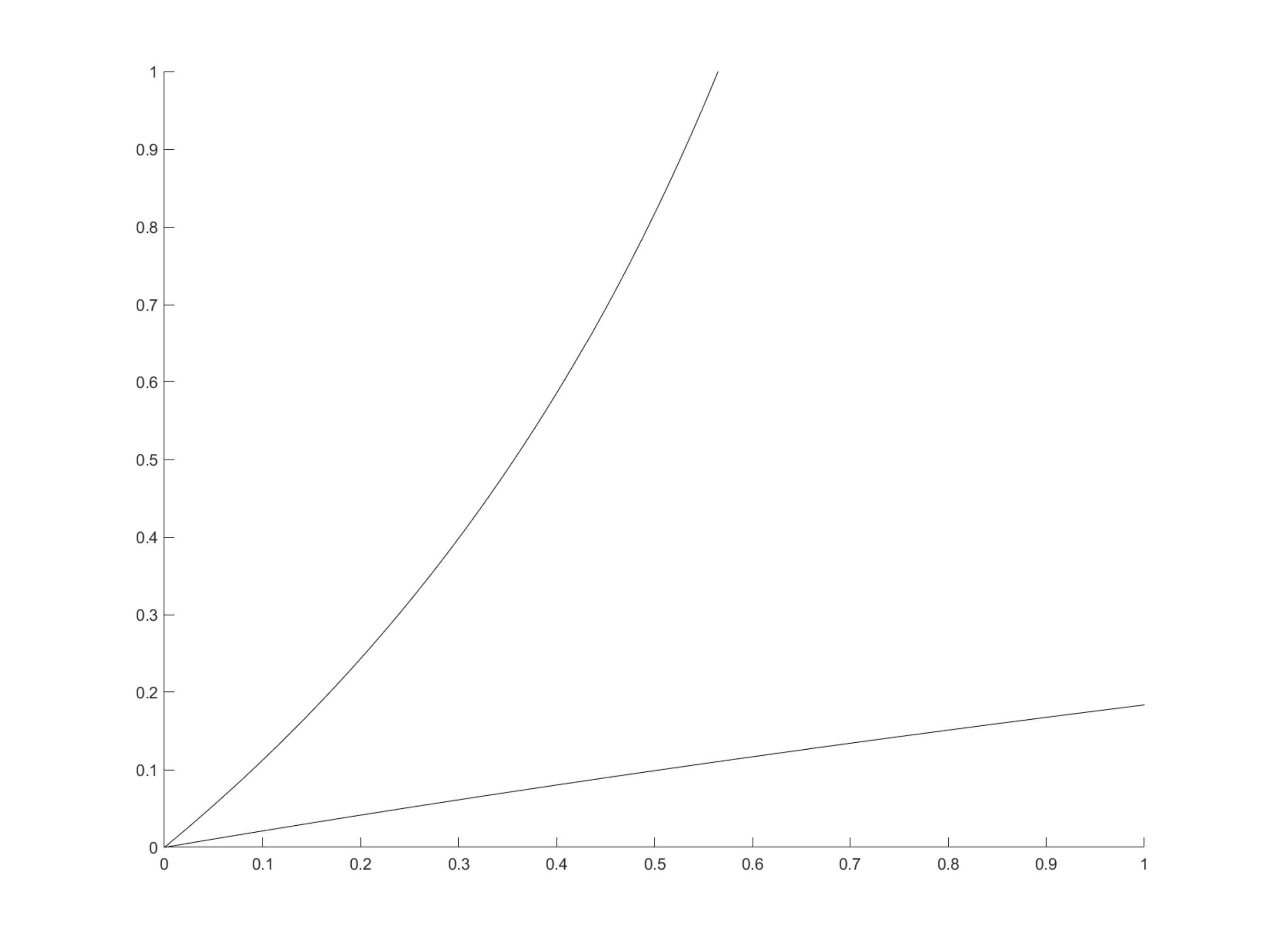}
  \begin{center}
  $b < (1-a)/\sqrt{n_1+n_2}$ \label{fig:trivial}
  \end{center}
\endminipage\hfill
\minipage{0.32\textwidth}
  \includegraphics[width=\linewidth]{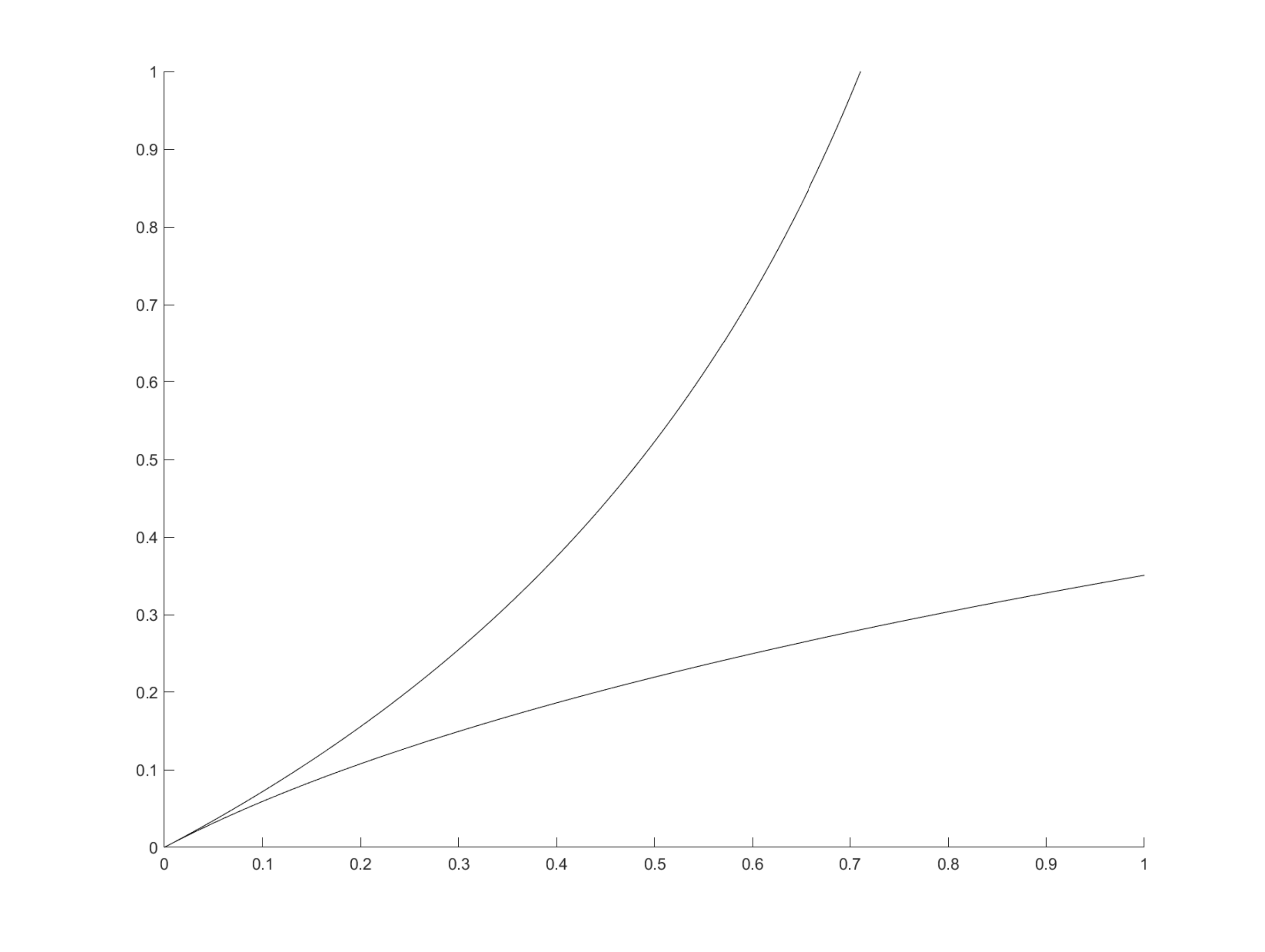}
  \begin{center}
  $b  \ = \  (1-a)/\sqrt{n_1+n_2}$ \label{fig:exact}
  \end{center}
\endminipage\hfill
\minipage{0.32\textwidth}%
  \includegraphics[width=\linewidth]{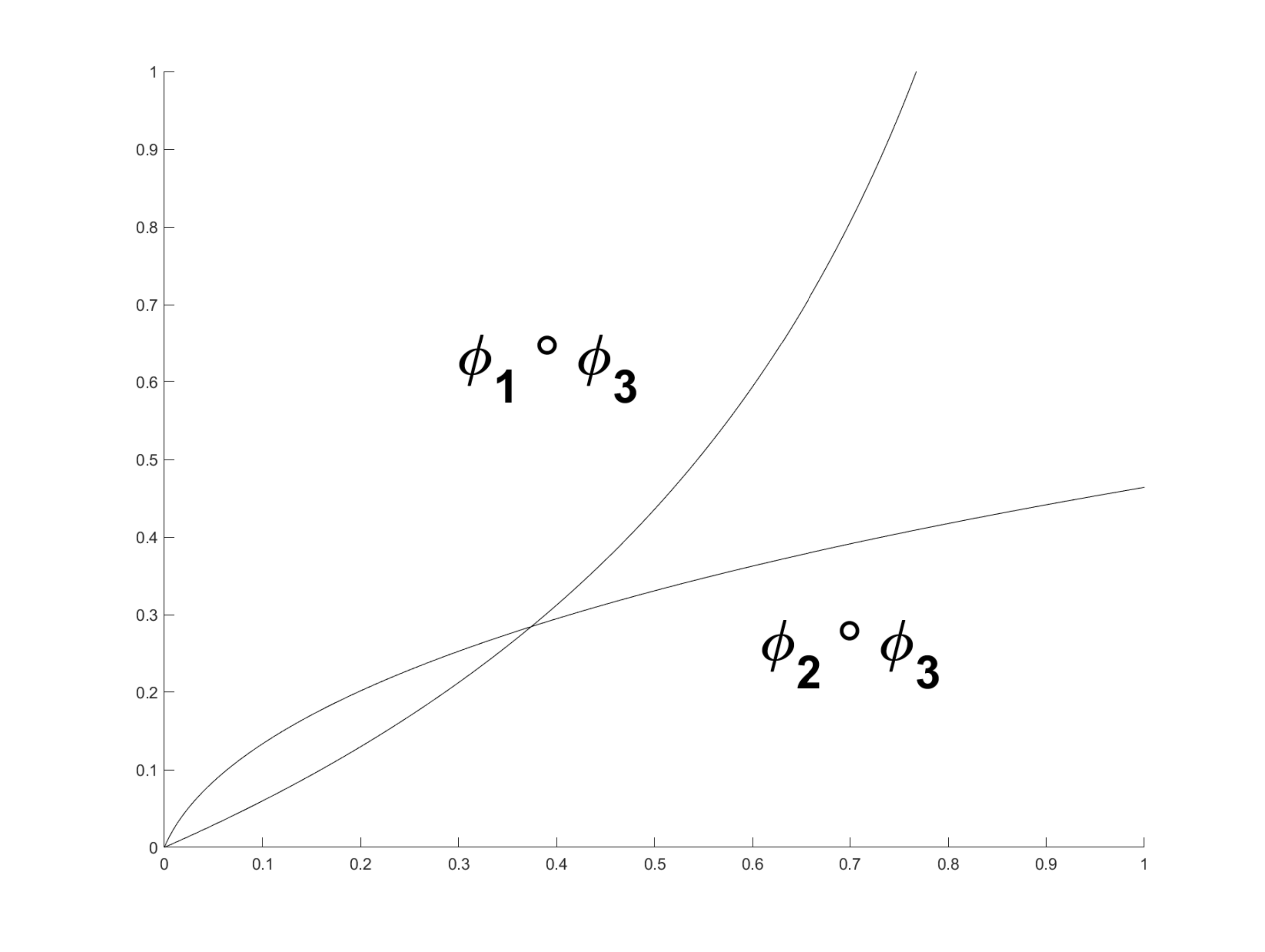}
  \begin{center}
  $b > (1-a)/\sqrt{n_1+n_2}$ \label{fig:nontrivial}
  \end{center}
\endminipage
\caption{\small Partial fixed points from $\phi_1 \circ \phi_3$ and $\phi_2 \circ \phi_3$ (From left to right, $b = 0.08,0.125,0.15, n_1 = 6, n_2 = 10, a = 0.5$).}
\label{fig:intersection_curves}
\normalsize
\end{figure}

\FloatBarrier

The convexity properties of $\phi_1$, $\phi_2$ and $\phi_3$ were analyzed by Becker et. al. \cite{bgkmrs}. It is known that $\phi_1(y,z)$ is convex, while $\phi_2(x,z)$ and $\phi_3(z,y)$ are concave. We extend their result by additionally showing that $\phi_2$ is non-decreasing in each argument, obtaining the following lemma:
\begin{lemma} \label{3_level_fixed_points_lemma}
Consider the map $F$.
\begin{enumerate}[(1)]
\item
There exists a continuous, twice differentiable convex function $\phi_1:[0,1] \to [0,1]$ such that for each $y \in [0,1]$ and $z \in [0,1]$, there is a $y' \in [0,1]$ and $z' \in [0,1]$ with $F(\phi_1(y,z),y,z)  \ = \  F(\phi_1(y,z),y',z')$.
\item
There exists a concave function $\phi_2:[0,1]^2 \to [0,1]$ that is non-decreasing in each variable such that for each $x \in [0,1]$ and $z \in [0,1]$, there is a $x' \in [0,1]$ and $z' \in [0,1]$ with $F(x,\phi_2(x,z), z)  \ = \  F(x',\phi_2(x,z), z')$.
\item
There exists a continuous, twice differentiable concave function $\phi_3:[0,1] \to [0,1]$ such that for each $x \in [0,1]$ and $y \in [0,1]$, there is a $x' \in [0,1]$ and $y' \in [0,1]$ with $F(x,y,\phi_3(x,y))  \ = \  F(x',y',\phi_3(x,y))$.
\end{enumerate}
\end{lemma}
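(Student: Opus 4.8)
The plan is to handle all three functions through one common template. For each $i$, solving the partial-fixed-point equation that sets the $i$-th coordinate of $F$ equal to its own input --- a linear equation in that coordinate --- reproduces the displayed formulas and exhibits each function as a composition $\phi_i = g\circ I_i$, where $g(u)=\frac{1-u}{1-au}$ and the inner aggregates are $I_1=(1-by)^{n_1}$, $I_2=(1-bx)(1-bz)^{n_2}$, and $I_3=1-by$ (so that $\phi_1$ and $\phi_3$ in fact depend on $y$ alone). Since $a\in(0,1)$ and each $I_i\in[0,1]$, the denominator $1-aI_i\ge 1-a>0$ never vanishes, so every $\phi_i$ is a quotient of smooth functions, hence continuous and twice differentiable; and because $g$ is a strictly decreasing bijection of $[0,1]$ onto itself, each $\phi_i$ maps into $[0,1]$. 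The partial-fixed-point identities then hold by construction, since substituting $\phi_i$ into the $i$-th slot leaves that coordinate invariant under $f_i$.

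Next I would dispose of the convexity of $\phi_1$ and the concavity of $\phi_3$. Both are functions of $y$ alone and coincide exactly with the hub and spoke partial-fixed-point functions of the two-level graph, so these are precisely the convexity properties established by Becker et al. \cite{bgkmrs}. Independently, one differentiates $g\circ I_1$ and $g\circ I_3$ twice: using $g'<0$ and $g''<0$ together with the monotonicity and convexity of the inner power map $I_1$ and the affine map $I_3$, the sign of $\phi_1''$ and $\phi_3''$ is read off from the chain rule $\phi''=g''(I)(I')^2+g'(I)I''$.

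The genuinely new content is the monotonicity of $\phi_2$, and here the common template pays off at once. Since $g'(u)=(a-1)/(1-au)^2<0$, the function $g$ is strictly decreasing, while $I_2(x,z)=(1-bx)(1-bz)^{n_2}$ is a product of positive factors that is strictly decreasing in each of $x$ and $z$ (as $b>0$). Hence $\partial_x\phi_2=g'(I_2)\,\partial_x I_2$ and $\partial_z\phi_2=g'(I_2)\,\partial_z I_2$ are each a product of two negative quantities, so $\phi_2$ is strictly increasing, and in particular non-decreasing, in each argument.

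The hard part will be the concavity of $\phi_2$, which, unlike $\phi_1$ and $\phi_3$, is a genuine function of two variables. A direct computation shows its inner aggregate $I_2$ is neither convex nor concave: its Hessian has a vanishing $xx$-entry and a nonzero mixed entry, hence negative determinant. Consequently the naive bound $H_{\phi_2}=g''(I_2)\nabla I_2\,\nabla I_2^{\top}+g'(I_2)H_{I_2}$ does not deliver joint concavity, and I expect one must instead read concavity coordinate-wise: freezing $z$ makes the inner map affine in $x$, so $\partial_{xx}\phi_2=g''(I_2)(\partial_x I_2)^2<0$, while freezing $x$ reduces $\phi_2$ to the hub-type composition $g\bigl((1-bx)(1-bz)^{n_2}\bigr)$ in $z$, whose concavity is exactly the one analyzed in \cite{bgkmrs}. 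Combining the smoothness, the coordinate-wise convexity and concavity, and the monotonicity just established then yields all three items of the lemma.
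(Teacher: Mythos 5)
Your template $\phi_i = g\circ I_i$ with $g(u)=\frac{1-u}{1-au}$, the smoothness/range observations, and the partial-fixed-point identities all match the paper (which derives the same closed form for $\phi_2$ via an intermediate-value argument), and your monotonicity argument for $\phi_2$ --- $\partial_x\phi_2 = g'(I_2)\,\partial_x I_2 > 0$ as a product of two negatives --- is correct and genuinely cleaner than the paper's cross-multiplication of differences. The gap is in the step you yourself flag as the hard part. The lemma asserts that $\phi_2:[0,1]^2\to[0,1]$ is concave, meaning \emph{jointly} concave, and joint concavity is exactly what the paper uses downstream: the proof of Lemma \ref{concavity_lemma} bounds $f((1-\alpha)u+\alpha u',(1-\alpha)v+\alpha v')$ below by $(1-\alpha)f(u,v)+\alpha f(u',v')$, and that lemma is then applied with $f=\phi_2$ to conclude $\phi_{2,3}$ is concave. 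Your fallback --- $\partial_{xx}\phi_2<0$ with $z$ frozen, plus one-variable concavity with $x$ frozen --- only gives separate (coordinate-wise) concavity, which is strictly weaker: $f(x,z)=xz$ is linear in each variable separately and non-decreasing on $[0,1]^2$, yet not jointly concave, and composing it with concave inner functions need not give anything concave. Worse, this gap cannot be closed: pushing your own Hessian decomposition one step further shows $\det H_{\phi_2}<0$ throughout the interior, so $H_{\phi_2}$ is indefinite and joint concavity is in fact \emph{false}. Concretely, with $a=b=1/2$ and $n_2=1$ one gets $\phi_2(1,0)=\phi_2(0,1)=2/3$ but $\phi_2(1/2,1/2)=14/23<2/3$, violating midpoint concavity along the segment from $(1,0)$ to $(0,1)$.

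Two further points of comparison. First, the paper does not actually prove this step either: it cites \cite{bgkmrs} for the concavity of $\phi_2$, but that reference treats $2$-level graphs, where every partial-fixed-point function has one variable, so the citation does not cover the two-variable claim; in hunting for a proof you have effectively exposed a defect in Lemma \ref{3_level_fixed_points_lemma} itself, and any repair must reformulate the statement (for instance, proving concavity of the one-variable curve $\phi_{2,3}(y)$ of (\ref{phi_2_3}) directly, which is all the fixed-point count in Theorem \ref{3-level_theorem} really uses). Second, your claim that the convexity of $\phi_1$ can be ``read off'' from $\phi''=g''(I)(I')^2+g'(I)I''$ is wrong as written: for $\phi_1=g\circ I_1$ both terms are negative (since $g'<0$, $g''<0$, and $I_1''\geq 0$), so this computation yields that $\phi_1$ is \emph{concave} as a function of $y$, the opposite of what item (1) asserts. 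The convexity in \cite{bgkmrs} belongs to the hub curve in the inverse parametrization (solving for $y$ in terms of $x$), a subtlety that your appeal to the chain rule, and indeed the paper's own statement of the lemma, both gloss over.
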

Note that we do not require $\phi_2$ to be continuous, nor to be twice differentiable. This is because we are restricted to the domain $[0,1]^2$, while $\phi_2$ extended to $\mathbb{R}^2$ often takes values outside of $[0,1]$ with $x,z \in [0,1]$, resulting in discontinuity when observed in the domain $[0,1]^2$. The important detail is that this function is concave, and non-decreasing in each argument.
\begin{proof}
$(1)$ and $(3)$ follow from [BGKMRS]. We get
\begin{align}
&\phi_1(y,z)  \ = \  x  \ = \   \frac{1-(1-by)^{n_1}}{1-a(1-by)^{n_1}} \label{phi_1} \\ \nonumber
&\phi_3(x,y)  \ = \  z  \ = \  \frac{by}{1-a+aby} \label{phi_3}.
\end{align}
We proceed to show $(2)$. We define
\begin{equation}
g_2(x,y,z)  \ = \  1-(1-ay)(1-bx)(1-bz)^{n_2}-y \label{g_2}.
\end{equation}
We first analyze the set of $3$-tuples $(x,y,z) \in [0,1]^3$ where $g_2(x,y,z) \ = \ 0$. We see that $g_2(0,0,0)  \ = \  0$, $g_2(x,0,z) > 0$ for $x \in (0,1]$, $z \in (0,1]$, and $g_2(x,1,z) < 0$ for $x \in [0,1]$, $z \in [0,1]$. Hence, by the Intermediate Value Theorem, for each $x \in (0,1]$ and $z \in (0,1]$ there is a number (which we denote by $\phi_2(x,z)$) such that $g_2(x,\phi_2(x,z),z) \ = \ 0$, $\phi_2(x,z) \in [0,1]$, and $\phi_2(x,z)$ is a continuous and differentiable function of $x$ and $z$:
\begin{equation}
\phi_2(x,z)  \ = \  y  \ = \  \frac{1-(1-bx)(1-bz)^{n_2}}{1-a(1-bx)(1-bz)^{n_2}}. \label{phi_2}
\end{equation}
Next, we show that $\phi_2(x,z)$ is non-decreasing in both $x$ and $z$. Let $\alpha \in [0,1]$, $\beta \in [0,1]$ with $\alpha > \beta$. Then
\begin{multline}
\frac{1-(1-b\alpha)(1-bz)^{n_2}}{1-a(1-b\alpha)(1-bz)^{n_2}}-\frac{1-(1-b\beta)(1-bz)^{n_2}}{1-a(1-b\beta)(1-bz)^{n_2}} \ = \ \\
\frac{\splitfrac{(1-a(1-b\beta)(1-bz)^{n_2})(1-(1-b\alpha)(1-bz)^{n_2})}{-(1-a(1-b\alpha)(1-bz)^{n_2})(1-(1-b\beta)(1-bz)^{n_2})}}{(1-a(1-b\alpha)(1-bz)^{n_2})(1-a(1-b\beta)(1-bz)^{n_2})}. \label{non-decreasing_calc_start}
\end{multline}
Expanding the numerator, we find
\begin{multline}
(1-a(1-b\beta)(1-bz)^{n_2})(1-(1-b\alpha)(1-bz)^{n_2})-(1-a(1-b\alpha)(1-bz)^{n_2})(1-(1-b\beta)(1-bz)^{n_2}) \\
 \ = \  (1-(1-b\alpha)(1-bz)^{n_2}-a(1-b\beta)+a(1-b\alpha)(1-b\beta)(1-bz)^{2n_2}) \\
-(1-(1-b\beta)(1-bz)^{n_2}-a(1-b\alpha)(1-bz)^{n_2}+a(1-b\alpha)(1-b\beta)(1-bd)^{2n_2}) > 0,
\end{multline}
while expanding the denominator gives
\begin{multline}
(1-a(1-b\alpha)(1-bz)^{n_2})(1-a(1-b\beta)(1-bz)^{n_2}) \\
 \ = \  1-a(1-b\beta)(1-bz)^{n_2}-a(1-b\alpha)(1-bz)^{n_2}+a^2(1-b\alpha)(1-b\beta)(1-bz)^{2n_2} > 0.
\end{multline}
Hence
\begin{equation}
\frac{1-(1-b\alpha)(1-bz)^{n_2}}{1-a(1-b\alpha)(1-bz)^{n_2}}-\frac{1-(1-b\beta)(1-bz)^{n_2}}{1-a(1-b\beta)(1-bz)^{n_2}} > 0,
\end{equation}
so $\phi_2(x,z)$ is non-decreasing in $x$.\newline
Similarly,
\begin{multline}
\frac{1-(1-bx)(1-b\alpha)^{n_2}}{1-a(1-bx)(1-b\alpha)^{n_2}}-\frac{1-(1-bx)(1-b\beta)^{n_2}}{1-a(1-bx)(1-b\beta)^{n_2}} \ = \ \\
\frac{(1-(1-bx)(1-b\alpha)^{n_2})(1-a(1-bx)(1-b\beta)^{n_2})-(1-a(1-bx)(1-b\alpha)^{n_2})(1-(bx)(1-b\beta)^{n_2})}{(1-a(1-bx)(1-b\alpha)^{n_2})(1-a(1-bx)(1-b\beta)^{n_2})}.
\end{multline}
Expanding the numerator yields
\begin{multline}
(1-(1-bx)(1-b\alpha)^{n_2})(1-a(1-bx)(1-b\beta)^{n_2})-(1-a(1-bx)(1-b\alpha)^{n_2})(1-(1-bx)(1-b\beta)^{n_2}) \\
 \ = \  (1-(1-bx)(1-b\alpha)^{n_2}-a(1-bx)(1-b\beta)^{n_2}+a(1-bx)^2(1-b\alpha)^{n_2}(1-b\beta)^{n_2})\\
-(1-(1-bx)(1-b\beta)^{n_2}-a(1-bx)(1-b\alpha)^{n_2}+a(1-bx)^2(1-b\alpha)^{n_2}(1-b\beta)^{n_2}) > 0,
\end{multline}
while expanding the denominator gives
\begin{multline}
(1-a(1-bx)(1-b\alpha)^{n_2})(1-a(1-bx)(1-b\beta)^{n_2}) \\
 \ = \  1-a(1-bx)(1-b\beta)^{n_2}-a(1-bx)(1-b\alpha)^{n_2}+a^2(1-b\alpha)(1-b\beta)(1-bx)^{2n_2} > 0,
\end{multline}
Hence
\begin{equation}
\frac{1-(1-bx)(1-b\alpha)^{n_2}}{1-a(1-bx)(1-b\alpha)^{n_2}}-\frac{1-(1-bx)(1-b\beta)^{n_2}}{1-a(1-bx)(1-b\beta)^{n_2}} > 0, \label{non-decreasing_calc_end}
\end{equation}
so $\phi_2(x,z)$ is also non-decreasing in $z$. \newline
We refer to [BGKMRS] for the concavity of $\phi_2$.
\end{proof}

We now have three surfaces $x = \phi_1(y,z)$, $y = \phi_2(x,z)$, and $z = \phi_3(x,y)$ which respectively represent the sets of partial fixed points where $x$, $y$, and $z$ are unchanged on iteration by $F$. To find the fixed points, we look at the intersection of these three surfaces. To simplify our problem further from one in three dimensions to one in two dimensions, we examine the curves formed by the intersection of $\phi_1$ and $\phi_3$, and similarly $\phi_2$ and $\phi_3$ (see: Figure \ref{fig:intersection_curves}). From here on, we refer to $\phi_2 \circ \phi_3$ as $\phi_{2,3}$ for brevity. The first curve is already done, as it is independent of $z$. For the second curve, as
\begin{equation}
x  \ = \  \frac{1}{b}+\frac{y-1}{b(1-ay)(1-bz)^{n_2}}
\end{equation}
and as $z  \ = \  by/(1-a+aby)$, we can write
\begin{equation}
\phi_{2,3}(y)  \ = \  x  \ = \  \frac{1}{b}+\frac{y-1}{b(1-ay)(1-\frac{b^2y}{1-a+aby})^{n_2}}. \label{phi_2_3}
\end{equation}
\newline\newline
The next lemma is useful for determining the concavity of $\phi_{2,3}(y)$.
\begin{lemma} \label{concavity_lemma}
Let $f:[0.1]^2 \to [0,1]$ be a concave function that is non-decreasing in each argument, and let $g_1: [0,1] \to [0,1]$, $g_2: [0,1] \to [0,1]$ be concave functions. Then, $f(g_1(x),g_2(z))$ is concave.
\end{lemma}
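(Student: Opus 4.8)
The plan is to verify the defining inequality for concavity directly, composing the three hypotheses in the correct order: the concavity of the inner maps $g_1,g_2$, then the monotonicity of $f$, and finally the joint concavity of $f$. Write $h(x,z)=f(g_1(x),g_2(z))$, fix two points $(x_1,z_1),(x_2,z_2)\in[0,1]^2$ and $\lambda\in[0,1]$, and set $\bar x=\lambda x_1+(1-\lambda)x_2$ and $\bar z=\lambda z_1+(1-\lambda)z_2$. The goal is to establish
\[
h(\bar x,\bar z)\ \geq\ \lambda\, h(x_1,z_1)+(1-\lambda)\, h(x_2,z_2).
\]

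First I would invoke the concavity of $g_1$ and $g_2$ to obtain the pointwise bounds $g_1(\bar x)\geq \lambda g_1(x_1)+(1-\lambda)g_1(x_2)$ and $g_2(\bar z)\geq \lambda g_2(z_1)+(1-\lambda)g_2(z_2)$. The decisive move is then to feed these into $f$: since $f$ is non-decreasing in each of its two arguments, lowering an input from $g_i$ of the convex combination down to the convex combination of the $g_i$-values can only decrease $f$, so applying monotonicity one coordinate at a time yields
\[
f\bigl(g_1(\bar x),g_2(\bar z)\bigr)\ \geq\ f\bigl(\lambda g_1(x_1)+(1-\lambda)g_1(x_2),\ \lambda g_2(z_1)+(1-\lambda)g_2(z_2)\bigr).
\]
This is precisely where the hypothesis that $f$ is non-decreasing (and not merely concave) is indispensable, since otherwise the direction of the inequalities inherited from $g_1$ and $g_2$ could not be transferred through $f$.

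Finally I would apply the joint concavity of $f$ on $[0,1]^2$ to the right-hand side. Writing $(u_i,v_i)=(g_1(x_i),g_2(z_i))\in[0,1]^2$, the above argument of $f$ is exactly the convex combination $\lambda(u_1,v_1)+(1-\lambda)(u_2,v_2)$, so concavity of $f$ gives
\[
f\bigl(\lambda(u_1,v_1)+(1-\lambda)(u_2,v_2)\bigr)\ \geq\ \lambda f(u_1,v_1)+(1-\lambda)f(u_2,v_2)\ =\ \lambda\, h(x_1,z_1)+(1-\lambda)\, h(x_2,z_2).
\]
Chaining the three displays produces the desired inequality, and concavity of $h$ follows.

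The argument is essentially routine once the order of operations is fixed, so I expect no deep obstacle — only two bookkeeping points. The first is that every intermediate argument must lie inside the domain $[0,1]^2$ on which the hypotheses on $f$ hold: this is immediate because each $g_i$ maps into $[0,1]$ and convex combinations of points of $[0,1]^2$ remain in $[0,1]^2$, so both evaluations of $f$ above are legitimate. The second, and the only genuine subtlety, is that monotonicity must be used \emph{coordinatewise and in the right direction}; it is the interplay between the ``$\geq$'' coming from inner concavity and the monotonicity of $f$ that keeps the whole chain consistent. I would remark that this is simply the two-variable, separate-arguments instance of the standard composition fact that a non-decreasing concave function of concave functions is concave, and then apply it with $f=\phi_2$, $g_1=\mathrm{id}$ in the $x$-slot and $g_2=\phi_3$ in the $z$-slot to deduce the concavity of $\phi_{2,3}$.
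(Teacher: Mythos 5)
Your proposal is correct and follows exactly the same three-step argument as the paper's own proof: apply concavity of $g_1$ and $g_2$, push the resulting inequalities through $f$ using its coordinatewise monotonicity, and finish with the joint concavity of $f$. Your write-up is in fact slightly cleaner (the paper's displays contain a few typos such as $\alpha g_1(y_2)$ where $\alpha g_2(y_2)$ is meant), and your closing remark about applying the lemma with $g_1 = \mathrm{id}$ matches how the paper uses it to conclude concavity of $\phi_{2,3}$.
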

\begin{proof}
As $g_1$, $g_2$ are concave, for any $x_1,x_2,y_1,y_2$ and $\alpha \in [0,1]$,
\begin{equation}
    g_1((1-\alpha)x_1+\alpha y_1) \ \geq \ (1-\alpha)g_1(x_1)+ \alpha g_1(y_1)
\end{equation}
and
\begin{equation}
    g_2((1-\alpha)x_2+\alpha y_2) \ \geq \ (1-\alpha)g_2(x_2)+ \alpha g_1(y_2).
\end{equation}
As $f$ is non-decreasing in each argument,
\begin{multline}
    f(g_1((1-\alpha)x_1+\alpha y_1), g_2((1-\alpha)x_2+\alpha y_2)) \\
    \ \geq \ f((1-\alpha)g_1(x_1)+ \alpha g_1(y_1), (1-\alpha)g_2(x_2)+ \alpha g_1(y_2)).
\end{multline}
As $f$ is concave,
\begin{multline}
    f((1-\alpha)g_1(x_1)+ \alpha g_1(y_1), (1-\alpha)g_2(x_2)+ \alpha g_1(y_2)) \\
    \ \geq \ (1-\alpha)f(g_1(x_1),g_2(x_2))+\alpha f(g_1(y_1),g_2(y_2)).
\end{multline}
Hence, $f(g_1(x),g_2(z))$ is concave.
\end{proof}
We also recall Lemma 2.2 from \cite{bgkmrs}, which is useful for determining the number and location of fixed points of $F$.
\begin{lemma}[{BGKMRS}] \label{bgkmrs_lemma}
Let $h_1$, $h_2$ be twice continuously differentiable functions such that $h_1(x)$ is convex and $h_2(x)$ is concave. If there exists some $p$ such that $h_1'(p) \leq h_2'(p)$ and $h_1(p)  \ = \  h_2(p)$, then $h_1(x) \neq h_2(x)$ for all $x > p$.
\end{lemma}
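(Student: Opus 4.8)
The plan is to collapse the two-function comparison into a single-variable sign question by studying the difference $d(x) = h_1(x) - h_2(x)$. Since $h_1$ is convex and $h_2$ is concave and both are twice continuously differentiable, we have $d'' = h_1'' - h_2'' \ge 0$ on the domain, so $d$ is convex and its derivative $d'$ is non-decreasing. The two hypotheses translate cleanly into this language: $h_1(p) = h_2(p)$ becomes $d(p) = 0$, and $h_1'(p) \le h_2'(p)$ becomes $d'(p) = h_1'(p) - h_2'(p) \le 0$. The conclusion $h_1(x) \ne h_2(x)$ for $x > p$ is then exactly the assertion that the convex function $d$ has no further zero strictly to the right of $p$.

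First I would record the supporting-line inequality that convexity supplies: for every $x$ in the domain, convexity of $d$ gives $d(x) \ge d(p) + d'(p)(x - p) = d'(p)(x - p)$, using $d(p) = 0$ (equivalently, this is the subtraction of the tangent-line bound $h_1(x) \ge h_1(p) + h_1'(p)(x-p)$ from the tangent-line bound $h_2(x) \le h_2(p) + h_2'(p)(x-p)$). Together with the monotonicity of $d'$, this is the only structural input available. To exploit it I would argue by contradiction: suppose some $q > p$ has $d(q) = 0$. Since $d(p) = d(q) = 0$ and $d$ is differentiable, Rolle's theorem produces an interior point $\xi \in (p,q)$ with $d'(\xi) = 0$; because $d'$ is non-decreasing, the sign of $d'$ is then forced to be $\le 0$ on $[p,\xi]$ and $\ge 0$ on $[\xi,q]$, so $d$ is monotone on each subinterval. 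The aim is to combine these monotonicities with the prescribed values $d(p)=d(q)=0$ and the slope sign $d'(p)\le 0$ to constrain $d$ on all of $[p,q]$ tightly enough to contradict the assumed second zero.

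The hard part will be the borderline case in which the supporting-line inequality is met with equality, i.e. the case where $d'$ is forced to be constantly $0$ on an entire subinterval and $d$ is therefore affine (in fact identically zero) there. In that degenerate situation the first-order bookkeeping above does not by itself preclude a second zero, so the decisive work is to rule out tangential re-contact with the axis by upgrading the weak bound $d(x) \ge d'(p)(x-p)$ to a strict one. I expect this tangency analysis — pinpointing exactly where $d''$ is allowed to vanish along $[p,q]$ and showing such flat stretches are incompatible with the configuration $d(p)=d(q)=0$, $d'(p)\le 0$ — to be the crux of the proof; the remainder is routine sign-pattern reasoning for a convex function with prescribed value and slope at $p$.
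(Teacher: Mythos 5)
Your reduction to the convex difference $d = h_1 - h_2$ is the right first move, but the contradiction you are hunting does not exist: the lemma as printed is false, so no completion of your plan can succeed. Take $h_1(x) = x^2$ and $h_2(x) = x - x^2/2$ on $[0,1]$ with $p = 0$: then $h_1$ is convex, $h_2$ is concave, both are twice continuously differentiable and map $[0,1]$ into $[0,1]$, $h_1(0) = h_2(0) = 0$, and $h_1'(0) = 0 \le 1 = h_2'(0)$, yet $h_1(2/3) = h_2(2/3) = 4/9$. Your own computations already display the trouble: the supporting-line bound $d(x) \ge d'(p)(x-p)$ has a \emph{non-positive} right-hand side for $x > p$ when $d'(p) \le 0$, so it constrains nothing; and the sign pattern you extract from Rolle's theorem ($d' \le 0$ then $d' \ge 0$, so $d$ dips below zero and climbs back to it) is not a contradiction but precisely what happens, e.g.\ for $d(x) = (x-p)(x-q)$. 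The ``borderline tangency case'' you single out as the crux is therefore a red herring: the transversal re-crossing is already fully consistent with all the hypotheses, so no refinement of the flat-stretch analysis can close the argument.

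For comparison, note that the paper supplies no proof of this statement --- it is quoted from \cite{bgkmrs} --- and the hypothesis appears to have been transcribed with the inequality reversed. The usable version assumes $h_1'(p) \ge h_2'(p)$, and then your tangent-line idea, run in the correct direction, finishes in one chain: for $x > p$, $h_1(x) \ge h_1(p) + h_1'(p)(x-p) \ge h_2(p) + h_2'(p)(x-p) \ge h_2(x)$, since a convex function lies above its tangents, a concave one below, and the tangent slopes are ordered. Equality at some $q > p$ forces equality throughout the chain, so both functions are affine on $[p,q]$ with common value and slope at $p$, hence identical there --- the only genuinely degenerate case, and the only place any ``tangency analysis'' is needed; it is excluded by a strict slope inequality or strict convexity/concavity. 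This $\ge$ orientation is also the one the paper itself relies on: in the proof of Lemma~\ref{bgkmrs_extended_lemma} the authors derive $h_1'(x) > h_2'(x)$ for $x \ge p$ before invoking Lemma~\ref{bgkmrs_lemma} to forbid further intersections to the right of $p$. With the printed hypothesis $h_1'(p) \le h_2'(p)$, the same chain yields $h_1 \ge h_2$ only for $x < p$, the opposite side from the stated conclusion. So the productive course is to prove the corrected statement (three lines, as above) and flag the misprint, not to invest further in the tangency case.
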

We are now able to determine the location of the fixed points.
\newline\newline
\begin{proof}[{Proof of Theorem 1.1, I(a)}]
From Lemma 2.1, $\phi_1(y)$ is convex, $\phi_2(x,z)$ is concave and non-decreasing in each argument, and $\phi_3(y)$ is concave. Then, as
\begin{equation}
\phi_2(x,\phi_3(x,y)) \ = \ y \ = \ \frac{1-(1-bx)(1-\frac{b^2y}{1-a+aby})^{n_2}}{1-a(1-bx)(1-\frac{b^2y}{1-a+aby})^{n_2}}
\end{equation}
and $x$ and $\phi_3(x,y)$ are both concave and are defined in $[0,1] \to [0,1]$, from Lemma 2.2, $\phi_{2,3}$ is concave as well.
Writing this as $\phi_{2,3}(y)$ (see (\ref{phi_2_3})), note that
\begin{equation}
\phi_{1,3}'(0)  \ = \  \phi_1'(0)  \ = \  \frac{bn_1}{1-a},\quad \phi_{2,3}'(0)  \ = \  \frac{(1-a)^2-b^2n_2}{b(1-a)}.
\end{equation}
We see that $\phi_{2,3}'(0) > \phi_1'(0)$ when $b \leq (1-a)/\sqrt{n_1+n_2}$. Hence, by Lemma 2.3, when $b \leq (1-a)/\sqrt{n_1+n_2}$, there is no $y>0$ such that $\phi_1(y) > \phi_{2,3}(y)$. Thus, when $b \leq (1-a)/\sqrt{n_1+n_2}$, $(0,0)$ is the only point at which $\phi_1(y)$ and $\phi_{2,3}(y)$ agree in $[0,1]^2$. Also, as $z \ = \ by/(1-a+aby)$, if $y \ = \ 0$ then $z \ = \ 0$. Therefore, the trivial fixed point is the unique fixed point of $F$ in $[0,1]^3$.
\end{proof}
The next lemma, which is similar to Lemma 2.3 from \cite{bgkmrs}, is the key to proving the existence of a unique non-trivial fixed point when $b > (1-a)/\sqrt{n_1+n_2}$.
\begin{lemma} \label{bgkmrs_extended_lemma}
Let $h_1: [0,1] \to [0,1]$ be a twice continuously differentiable function such that $h_1(x)$ is convex, and let $h_2: [0,1] \to [0,1]$ be a function that is either:
\begin{enumerate}[1.]
\item
twice continuously differentiable, or
\item
is discontinuous only at $x \ = \ 0$,
\end{enumerate}
and $h_2(x)$ is concave. Furthermore, let $h_1(0)  \ = \  h_2(0)  \ = \  0$ and $h_1(x) \neq h_2(x)$ for $x>0$ sufficiently small. Then there exists at most one other $x>0$ for which $h_1(x)  \ = \  h_2(x)$.
\end{lemma}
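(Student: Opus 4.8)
The plan is to reduce the whole statement to a single structural fact: the difference $D := h_1 - h_2$ is convex. Indeed, $h_1$ is convex by hypothesis and $-h_2$ is convex because $h_2$ is concave, so on $(0,1]$ — where both functions are continuous — $D$ is a continuous convex function. I want to emphasize up front that the twice-differentiability assumptions are not the essential ingredient; all I will actually use is convexity of $h_1$, concavity of $h_2$, the shared value $h_1(0)=h_2(0)=0$, and the codomain constraint $0 \le h_2 \le 1$. The governing principle is that a convex function cannot vanish at three distinct points without vanishing on the entire interval they span.

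Accordingly, I would argue by contradiction: suppose that, besides $x=0$, there are two positive solutions $0 < x_1 < x_2 \le 1$ with $D(x_1)=D(x_2)=0$. First I would establish the lower bound $D \ge 0$ on $(0,x_1)$. For $x \in (0,x_1)$ write $x_1$ as a convex combination of $x$ and $x_2$, namely $x_1 = \lambda x + (1-\lambda)x_2$ with $\lambda = (x_2-x_1)/(x_2-x) \in (0,1)$; convexity then gives $0 = D(x_1) \le \lambda D(x) + (1-\lambda)D(x_2) = \lambda D(x)$, whence $D(x) \ge 0$.

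Next I would produce the matching upper bound $D \le 0$ on $(0,x_1)$ by invoking the behavior at the origin. Since $h_2$ is concave and bounded, the one-sided limit $L := \lim_{x\to 0^+} h_2(x)$ exists and is finite, and because $h_2 \ge 0$ we get $L \ge 0$; as $h_1$ is continuous at $0$ with $h_1(0)=0$, the limit $\ell := \lim_{x\to 0^+} D(x) = -L$ satisfies $\ell \le 0$. Applying the chord inequality for $D$ on $(0,x_1]$ with an interior point $u$, $D(x) \le \frac{x_1 - x}{x_1 - u}\,D(u) + \frac{x-u}{x_1 - u}\,D(x_1)$, and letting $u \to 0^+$ yields $D(x) \le \frac{x_1 - x}{x_1}\,\ell + \frac{x}{x_1}D(x_1) = \frac{x_1-x}{x_1}\,\ell \le 0$ for every $x \in (0,x_1)$. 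Combining the two bounds forces $D \equiv 0$ on $(0,x_1)$, which contradicts the hypothesis that $h_1(x) \ne h_2(x)$ for all sufficiently small $x>0$. Hence at most one positive crossing can occur.

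The step I expect to be the main obstacle is precisely this control of $D$ near $0$ in the discontinuous case (option 2 of the hypothesis): a priori a convex $D$ with $D(0)=0$ could be positive on $(0,x_1)$ and still admit two positive zeros, so the origin argument is what rules this out. The resolution is that the codomain restriction $h_2 \ge 0$ pins the direction of any jump, giving $L \ge 0$ and therefore $\lim_{0^+} D \le 0$, which is exactly what closes the gap. In the genuinely continuous case this subtlety evaporates: $D$ is then convex on the closed interval $[0,1]$ and vanishes at $0, x_1, x_2$, so the same two inequalities immediately give $D \equiv 0$ on $[0,x_2]$, again a contradiction.
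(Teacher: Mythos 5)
Your proof is correct, and it takes a genuinely different route from the paper. The paper localizes at the first positive intersection $p$: it applies the Mean Value Theorem to $h_1$ and $h_2$ on $[0,p]$ to produce $c_1,c_2\in(0,p)$ with $h_1'(c_1)=h_1(p)/p=h_2(p)/p=h_2'(c_2)$, uses monotonicity of the derivatives (convexity of $h_1$, concavity of $h_2$) to conclude $h_1'(x)>h_2'(x)$ for all $x\ge p$, and then invokes Lemma \ref{bgkmrs_lemma} to rule out any intersection beyond $p$. You instead replace this differential machinery by a pure chord-inequality argument on the convex difference $D=h_1-h_2$: two positive zeros $x_1<x_2$ force $D\ge 0$ on $(0,x_1)$, while the boundary behavior at the origin (a bounded concave $h_2$ has a finite one-sided limit $L\ge 0$, so $\lim_{u\to 0^+}D(u)=-L\le 0$, and the chord inequality passes to this limit) forces $D\le 0$ there, so $D\equiv 0$ on $(0,x_1)$, contradicting $h_1\neq h_2$ near $0$. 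Each step checks out: the convex-combination coefficients are right, the existence of $L$ is a standard property of bounded concave functions, and you never evaluate $D$ at $0$, only limits from the right. What your approach buys is robustness and generality: it uses no differentiability at all, so hypothesis 2 (discontinuity at $x=0$) is handled on exactly the same footing as hypothesis 1; by contrast, the paper's application of the MVT to $h_2$ on $[0,p]$ tacitly uses the attained value $h_2(0)=0$, i.e.\ continuity of $h_2$ at $0$, which is precisely what fails in case 2 --- so your argument actually covers the discontinuous case more cleanly than the paper's own proof does. What the paper's approach buys is a sharper geometric byproduct: beyond the unique crossing $p$ one has $h_1'>h_2'$ and hence $h_1$ strictly above $h_2$, which is the separation picture exploited in the proof of Theorem \ref{3-level_theorem} II(a), and it reuses Lemma \ref{bgkmrs_lemma} rather than introducing a new mechanism.
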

\begin{proof}
The claim is trivial if there is only one point of intersection, so we assume there are at least two. Without loss of generality, let $p>0$ be the first point above $0$ where $h_1$ and $h_2$ agree. Such a point exists because both $h_1$ and $h_2$ are continuous in $(0,1]$ in any case, and $h_1(x) \neq h_2(x)$ for $x>0$ sufficiently small.
As $h_1(x)$ is convex, $h_1'(x)$ is increasing. By the Mean Value Theorem, there exists a point $c_1 \in (0,p)$ such that
\begin{equation}
h_1'(c_1)  \ = \  \frac{h_1(p)-h_1(0)}{p-0}  \ = \  \frac{h_1(p)}{p}.
\end{equation}
As $h_1'$ is increasing, $h_1'(p) > h_1'(c_1)$. Furthermore, $h_1'(x) > h_1(c_1)$ for all $x \geq p$. Similarly, as $h_2(x)$ is concave, $h_2'(x)$ is decreasing. By the Mean Value Theorem, there is a point $c_2 \in (0,p)$ such that
\begin{equation}
h_2'(c_2)  \ = \  \frac{h_2(p)-h_2(0)}{p-0}  \ = \  \frac{h_2(p)}{p}.
\end{equation}
As $h_2'$ is decreasing, $h_2'(p) < h_2'(c_2)$, and $h_2'(x) < h_2'(c_2)$ for all $x \geq p$. However, as $h_1(p)  \ = \  h_2(p)$, we get $h_1'(c_1) \ = \ h_2'(c_2)$, so $h_1'(x) > h_2'(x)$ for all $x \geq p$. Then, from Lemma 2.3, there cannot be another point of intersection other than $p$.
\end{proof}
We now complete our analysis.
\begin{proof}[{Proof of Theorem 1.1, II(a)}]
First, note that when $y  \ = \  0$, $x  \ = \  \phi_1(y)  \ = \  \phi_1(0)  \ = \  0$, and $x  \ = \  \phi_2(y)  \ = \  \phi_2(0)  \ = \  0$. Through direct inspection, it is easy to see that when $n_2 \geq 2$, $\phi_{2,3}$ is discontinuous at only $x \ = \ 0$, and twice continuously differentiable otherwise. We also know from the proof of Theorem 1.1 I(a) that $\phi_1$ is convex and $\phi_{2,3}$ is concave. Now, if $\phi_{2,3}$ has a discontinuity at only $x \ = \ 0$, then for $y>0$ sufficiently small, $\phi_1(y)$ is above $\phi_{2,3}(y)$ because $\phi_1$ is continuous and convex while $\phi_{2,3}$ is concave, so for some $y > 0$, $\phi_1(y) > 0$ while $\phi_{2,3}(y)  \ = \  0$. Otherwise, we know from the proof of Theorem 1.1 I(a) that $\phi_1(y)$ is above $\phi_{2,3}(y)$ near the origin, since $\phi_1'(0) > \phi_2'(0)$. As $y \ = \ \phi_{2,3}(x)$ is defined in $[0,1]$ for all $x \in [0,1]$ and $x  \ = \  \phi_1(y)$ is defined in $[0,1]$ for all $y \in [0,1]$, as $x \to 1$ $\phi_{2,3}(x)$ tends to a number strictly less than 1, thus the curve $y  \ = \  \phi_{2,3}(x)$ hits the line $x \ = \ 1$ at a point below $(1,1)$. Similarly, the curve $x \ = \ \phi_1(y)$ hits the line $y \ = \ 1$ to the left of $(1,1)$. Thus, at some point $\phi_{2,3}(y)$ flips to be above $\phi_1(y)$, so there must be a point at which the two intersect, which is a non-trivial fixed point. When the $y$ coordinate of this point is determined, so is $z$, because $z \ = \ by/(1-a+aby)$.

We now know that there exists at least two fixed points, the trivial one and a non-trivial one. By Lemma 2.4, there are no other fixed points, so there exists a unique non-trivial fixed point when $b > (1-a)/\sqrt{n_1+n_2}$.
\end{proof}
\end{subequations}

\hfill\\
\subsection{Dynamical Behavior: $b\leq(1-a)/\sqrt{n_1+n_2}$}
\hfill\\

\begin{subequations}

\subsubsection{Properties of Region I}
\begin{figure}
\minipage{0.32\textwidth}
  \includegraphics[width=\linewidth]{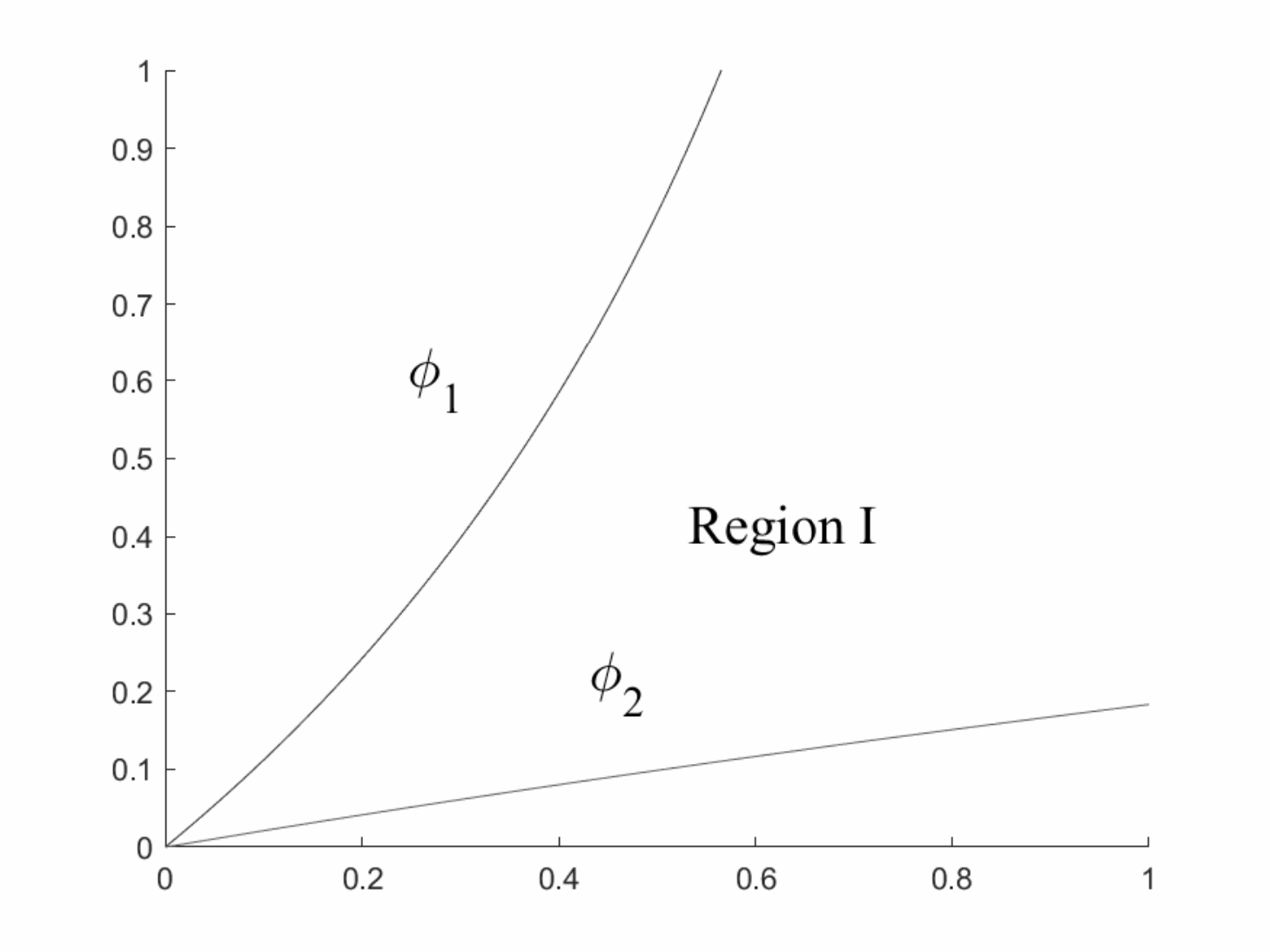}
  \begin{center}
  $z \ = \ 0$ \label{fig:trivial0}
  \end{center}
\endminipage\hfill
\minipage{0.32\textwidth}
  \includegraphics[width=\linewidth]{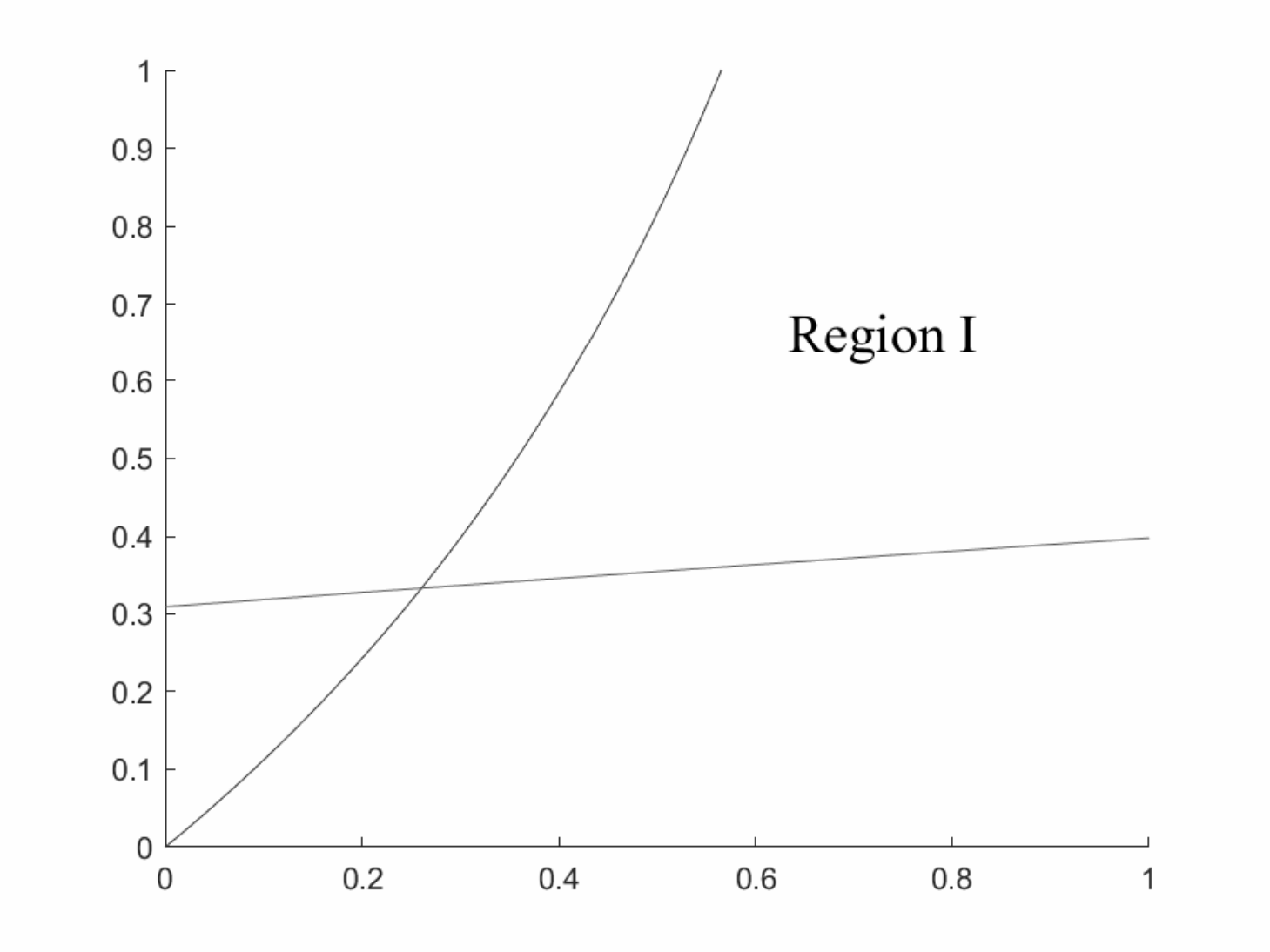}
  \begin{center}
  $z \ = \ 0.25$ \label{fig:trivial25}
  \end{center}
\endminipage\hfill
\minipage{0.32\textwidth}%
  \includegraphics[width=\linewidth]{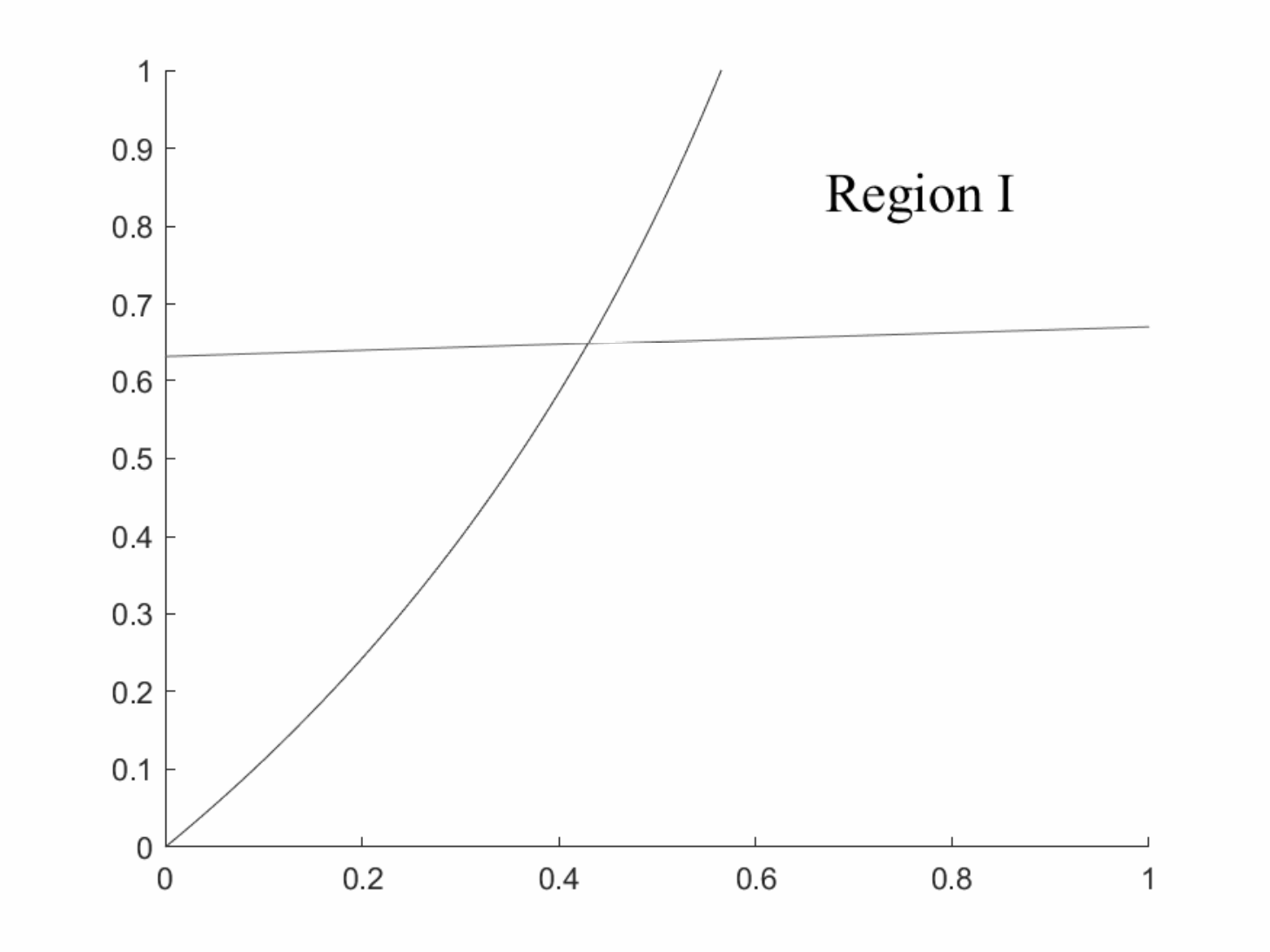}
  \begin{center}
  $z \ = \ 0.75$ \label{fig:trivial75}
  \end{center}
\endminipage
\caption{When $b \le (1-a)/\sqrt{n_1+n_2}$, slices of Region I on the xy-plane at various values of $z$. \\
($b  \ = \  0.08, n_1  \ = \  6, n_2  \ = \  10, a  \ = \  0.5$).} \label{fig:small_b_region_i}
\normalsize
\end{figure}
\FloatBarrier
We break the analysis of $F$ into regions induced by $\phi_1$, $\phi_2$ and $\phi_3$. We focus on the effect of $F$ in Region I (see Figure \ref{fig:small_b_region_i}). The first lemma provides information on the image of this region under $F$, and in the next lemma we use this to show that Region I maps to itself under $F$.
\begin{lemma} \label{decreasing_lemma}
Points $(x,y,z)$ in Region I strictly decrease in $x$, $y$ and $z$ on iteration by $F$.
\end{lemma}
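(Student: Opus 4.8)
The plan is to read off the three coordinatewise decreases directly from a factorization of each increment $f_i(x,y,z)-(\text{$i$th coordinate})$ into a manifestly positive factor times a factor whose sign is pinned down by membership in Region I. Recall that Region I is the set of $(x,y,z)\in[0,1]^3$ lying above all three partial fixed-point surfaces, that is, where $x>\phi_1(y)$, $y>\phi_2(x,z)$, and $z>\phi_3(y)$; these are exactly the inequalities I will feed into the factorizations. The point is that each $f_i$ minus its own coordinate is a strictly decreasing function of that coordinate (with the other two held fixed) that vanishes precisely on the corresponding nullcline, so lying strictly above the nullcline forces the increment to be negative.

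Concretely, writing $u=(1-by)^{n_1}$ and using (\ref{phi_1}), a one-line rearrangement gives
\begin{equation*}
f_1(x,y,z)-x=\bigl(1-a(1-by)^{n_1}\bigr)\bigl(\phi_1(y)-x\bigr),
\end{equation*}
and similarly, with $v=(1-bx)(1-bz)^{n_2}$ and using (\ref{phi_2}) and (\ref{phi_3}),
\begin{equation*}
f_2(x,y,z)-y=\bigl(1-a(1-bx)(1-bz)^{n_2}\bigr)\bigl(\phi_2(x,z)-y\bigr),
\end{equation*}
\begin{equation*}
f_3(x,y,z)-z=(1-a+aby)\bigl(\phi_3(y)-z\bigr).
\end{equation*}
In each identity the leading factor is strictly positive: since $a\in(0,1)$ and the relevant product lies in $(0,1]$ we have $a(1-by)^{n_1}<1$ and $a(1-bx)(1-bz)^{n_2}<1$, while $1-a+aby>0$ because $1-a>0$ and $aby\geq0$. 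By the defining inequalities of Region I the second factor is strictly negative in each case, so $f_1<x$, $f_2<y$, and $f_3<z$, which is the claim.

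I expect no real obstacle beyond bookkeeping. The two points needing care are, first, confirming that each leading factor is strictly positive on all of $[0,1]^3$ (immediate from $a<1$ together with the product lying in $(0,1]$), and second, handling the level-$2$ coordinate correctly: unlike $\phi_1$ and $\phi_3$, the surface $\phi_2$ genuinely depends on two coordinates, so the comparison must be made against $\phi_2(x,z)$ evaluated at the current $x$ and $z$. Strictness of all three inequalities is then automatic, since in the interior of Region I the inequalities $x>\phi_1(y)$, $y>\phi_2(x,z)$, and $z>\phi_3(y)$ are strict.
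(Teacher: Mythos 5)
Your proposal is correct and is essentially the paper's own argument: the paper likewise starts from the three defining inequalities of Region I and clears the (positive) denominators $1-a(1-by)^{n_1}$, $1-a(1-bx)(1-bz)^{n_2}$, and $1-a+aby$ to rearrange each inequality into $f_1(x,y,z)<x$, $f_2(x,y,z)<y$, $f_3(x,y,z)<z$. Your explicit factorization identities are just a cleaner packaging of that same cross-multiplication, so there is nothing to add.
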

\begin{proof}
A point $(x,y,z)$ in Region I region satisfies the inequalities:
\begin{align}
    x &> \frac{1-(1-by)^{n_1}}{1-a(1-by)^{n_1}} \label{region_i_x} \\
    y &> \frac{1-(1-bx)(1-bz)^{n_2}}{1-a(1-bx)(1-bz)^{n_2}} \label{region_i_y} \\
    z &> \frac{by}{1-a+aby}. \label{region_i_z}
\end{align}
As $a,b \in (0,1)$, $x,y \in (0,1]$ and $n_1 \in \mathbb{Z}^+$, we have
\begin{equation}
    x-ax(1-by)^{n_1} > 1-(1-by)^{n_1},
\end{equation}
hence
\begin{align}
    x &> 1-(1-by)^{n_1} + ax(1-by)^{n_1} \nonumber \\
    & \ = \  1-(1-ax)(1-by)^{n_1} \nonumber \\
    & \ = \  f_1(x,y,z) \label{x_iter_ineq}
\end{align}
Similarly, as $a,b \in (0,1)$, $x,y,z \in (0,1]$ and $n_2 \in \mathbb{Z}^+$, we have
\begin{equation}
    y-ay(1-bx)(1-bz)^{n_2} > 1-(1-bx)(1-bz)^{n_2},
\end{equation}
and thus
\begin{align}
    y &> 1-(1-bx)(1-bz)^{n_2}+ay(1-bx)(1-bz)^{n_2} \nonumber \\
    & \ = \  1-(1-ay)(1-bx)(1-bz)^{n_2} \nonumber \\
    & \ = \  1-(1-ay)(1-bx)(1-bz)^{n_2} \nonumber \\
    & \ = \  f_2(x,y,z).  \label{y_iter_ineq}
\end{align}
Finally, as $a,b \in (0,1)$ and $y,z \in (0,1]$ we have
\begin{equation}
z-az+abyz > by,
\end{equation}
which yields
\begin{align}
    z &> az+by-abyz \nonumber \\
    & \ = \  1-(1-az-by+abyz) \nonumber \\
    & \ = \  1-(1-az)(1-by) \nonumber \\
    & \ = \  f_3(x,y,z). \label{z_iter_ineq}
\end{align}
Thus, the iterate of any point $(x,y,z)$ in Region I by $F$ is strictly decreasing in $x$, $y$ and $z$.
\end{proof}
\begin{lemma}
The image of Region I under $F$ is contained in Region I.
\end{lemma}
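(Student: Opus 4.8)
The plan is to write $F(x,y,z)=(x',y',z')$ with $x'=f_1(x,y,z)$, $y'=f_2(x,y,z)$, $z'=f_3(x,y,z)$, and to verify directly that the image point again satisfies the three defining inequalities of Region~I, namely $x'>\phi_1(y')$, $y'>\phi_2(x',z')$, and $z'>\phi_3(y')$. Two facts drive everything. First, Lemma~\ref{decreasing_lemma} already gives that every coordinate strictly decreases, so $x'<x$, $y'<y$, $z'<z$. Second, each component map is strictly increasing in its own distinguished coordinate with slope in $(0,1)$ (indeed $\partial f_1/\partial x=a(1-by)^{n_1}$, $\partial f_2/\partial y=a(1-bx)(1-bz)^{n_2}$, $\partial f_3/\partial z=a(1-by)$, all in $(0,1)$), and each surface $\phi_i$ is precisely the fixed-point locus of the corresponding $f_i$ in that coordinate, i.e. $f_1(\phi_1(y),y,z)=\phi_1(y)$, $f_2(x,\phi_2(x,z),z)=\phi_2(x,z)$, and $f_3(x,y,\phi_3(y))=\phi_3(y)$.

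With these in hand, each inequality follows from the same two-step comparison, which I illustrate for the $x$-coordinate. Since $(x,y,z)$ lies in Region~I we have $x>\phi_1(y)$, and because $f_1$ is increasing in its first argument, $x'=f_1(x,y,z)>f_1(\phi_1(y),y,z)=\phi_1(y)$; this bounds $x'$ below by $\phi_1$ evaluated at the \emph{old} arguments. Then, because $\phi_1$ is increasing and $y'<y$, we have $\phi_1(y')<\phi_1(y)<x'$, which is exactly the first Region~I inequality for the image. The $z$-coordinate is identical in form: from $z>\phi_3(y)$ and monotonicity of $f_3$ in $z$ we get $z'=f_3(x,y,z)>f_3(x,y,\phi_3(y))=\phi_3(y)$, and then monotonicity of $\phi_3$ with $y'<y$ gives $\phi_3(y')<\phi_3(y)<z'$.

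The crux is the $y$-coordinate, because $\phi_2$ depends on two coordinates, both of which move under $F$. The first step is unchanged: from $y>\phi_2(x,z)$ and monotonicity of $f_2$ in $y$ we obtain $y'=f_2(x,y,z)>f_2(x,\phi_2(x,z),z)=\phi_2(x,z)$. The second step is where the extra structure is essential: since $\phi_2$ is \emph{non-decreasing in each argument} (the refinement of \cite{bgkmrs} established in Lemma~\ref{3_level_fixed_points_lemma}(2)) and both $x'<x$ and $z'<z$, we get $\phi_2(x',z')\le\phi_2(x,z')\le\phi_2(x,z)<y'$, hence $y'>\phi_2(x',z')$. This is precisely why the monotonicity of $\phi_2$ had to be proved separately: without it one could not control $\phi_2$ at the shifted pair $(x',z')$, and the single-variable monotonicity used for $\phi_1$ and $\phi_3$ would not suffice.

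I would close by recording the two minor facts invoked above that are not stated verbatim earlier: the slope computations for the $f_i$, and the monotonicity of $\phi_1$ and $\phi_3$. Both follow by direct differentiation of the closed forms obtained in the proof of Lemma~\ref{3_level_fixed_points_lemma}; for instance writing $\phi_1=(1-u)/(1-au)$ with $u=(1-by)^{n_1}$ shows $\tfrac{d\phi_1}{du}=(a-1)/(1-au)^2<0$, and $u$ is decreasing in $y$, so $\phi_1$ is increasing, and an analogous computation gives $\phi_3'(y)=b(1-a)/(1-a+aby)^2>0$. The conceptual summary is that the whole argument is a monotonicity sandwich: iteration pushes each coordinate strictly down, while each defining surface moves down no faster because the $\phi_i$ are monotone in exactly the coordinates that decreased, so the image point remains strictly above all three surfaces and hence in Region~I.
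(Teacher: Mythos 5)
Your proof is correct and follows essentially the same strategy as the paper's: for each coordinate you show the image coordinate stays strictly above the corresponding surface evaluated at the old arguments, then use monotonicity of the $\phi_i$ together with the coordinate-wise decrease from Lemma \ref{decreasing_lemma} to pass to the new arguments, with the two-argument monotonicity of $\phi_2$ doing the essential work in the $y$-step. The only difference is one of execution rather than substance: where you obtain the intermediate inequalities (e.g.\ $x'>\phi_1(y)$) abstractly from monotonicity of each $f_i$ in its own coordinate plus the partial-fixed-point identity $f_i(\ldots,\phi_i,\ldots)=\phi_i$, the paper grinds out the same inequalities by explicit algebraic manipulation, and in its $y$-step it invokes monotonicity of $\phi_2$ only in the first argument, handling the shift $z\to z'$ through monotonicity of $f_2$ in $z$ rather than of $\phi_2$ in $z$.
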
 \label{contained_lemma_1}
\begin{proof}
We prove that for a point $(x,y,z)$ in Region I, its $x$-coodinate iterate satisfies (\ref{region_i_x}), its $y$-coordinate iterate satisfies (\ref{region_i_y}), and its $z$-coordinate iterate satisfies (\ref{region_i_z}).
\paragraph{\newline $x$-coordinate Iteration: \newline}
The $x$-coordinate iterate must satisfy
\begin{equation}
    1-(1-ax)(1-by)^{n_1} \ > \ \frac{1-(1-b(1-(1-ay)(1-bx)(1-bz)^{n_2}))^{n_1}}{1-a(1-b(1-(1-ay)(1-bx)(1-bz)^{n_2}))^{n_1}}.
\end{equation}

As $(x,y,z)$ is in the given region, we have
\begin{equation}
    x > 1-(1-ax)(1-by)^{n_1}.
\end{equation}
As $a,b \in (0,1)$, $x,y \in (0,1]$ and $n_1 \in \mathbb{Z}^+$, it follows that
\begin{equation}
    \frac{x}{1-(1-ax)(1-by)^{n_1}} > 1
\end{equation}
so
\begin{equation}
    1-\frac{x}{1-(1-ax)(1-by)^{n_1}} < 1-1.
\end{equation}
As $a(1-by)^{n_1}>0$,
\begin{equation}
    1-\frac{ax(1-by)^{n_1}}{1-(1-ax)(1-by)^{n_1}} < 1-a(1-by)^{n_1}.
\end{equation}
Simplifying the left hand side of this inequality, we obtain
\begin{align}
    1-\frac{ax(1-by)^{n_1}}{1-(1-ax)(1-by)^{n_1}} &< 1-a(1-by)^{n_1} \nonumber \\
    \frac{1-(1-ax)(1-by)^{n_1}}{1-(1-ax)(1-by)^{n_1}}-\frac{ax(1-by)^{n_1}}{1-(1-ax)(1-by)^{n_1}} &< 1-a(1-by)^{n_1} \nonumber \\
    \frac{1+(ax-1)(1-by)^{n_1}-ax(1-by)^{n_1}}{1-(1-ax)(1-by)^{n_1}} &< 1-a(1-by)^{n_1} \nonumber \\
    \frac{1-(1-by)^{n_1}}{1-(1-ax)(1-by)^{n_1}} &< 1-a(1-by)^{n_1}.
\end{align}
Hence
\begin{align}
    1-(1-by)^{n_1} &< 1-a(1-by)^{n_1}(1-(1-ax)(1-by)^{n_1}) \nonumber \\
    1-(1-ax)(1-by)^{n_1} &< \frac{1-(1-by)^{n_1}}{1-a(1-by)^{n_1}}. \label{inter_ineq_x_1}
\end{align}
Next, as $y > 1-(1-ay)(1-bx)(1-bz)^{n_2}$, $a,b \in (0,1)$, $x,y,z \in [0,1)$ and $n_1,n_2 \in \mathbb{Z}^+$,
\begin{equation}
    (1-by)^{n_1} < (1-b(1-(1-ay)(1-bx)(1-bz)^{n_2}))^{n_1}.
\end{equation}
Let $1-(1-ay)(1-bx)(1-bz)^{n_2}  \ = \  c$. Then, there exists $\delta > 0$ with $0 < c-\delta < c$ such that $c-\delta  \ = \  (1-by)^{n_1}$, and we have
\begin{align}
    \delta - ac\delta &\ > \ a\delta - ac\delta \nonumber \\
    1-c+\delta-ac+ac^2-ac\delta &\ > \ 1-c+a\delta-ac-ac\delta+ac^2 \nonumber \\
    (1-ac)(1-c+\delta) &\ > \ (1-c)(1-a+a\delta) \nonumber \\
    1-c+\delta &\ > \ \frac{(1-c)(1-ac+a\delta)}{1-ac} \nonumber \\
    \frac{1-(c-\delta)}{1-a(c-\delta)} &\ > \ \frac{1-c}{1-ac} \nonumber \\
    \frac{1-(1-by)^{n_1}}{1-a(1-by)^{n_1}} &\ > \ \frac{1-(1-b(1-(1-ay)(1-bx)(1-bz)^{n_2}))^{n_1}}{1-a(1-b(1-(1-ax)(1-by)(1-bz)^{n_2}))^{n_1}}. \label{inter_ineq_x_2}
\end{align}
Thus, from (\ref{inter_ineq_x_1}) and (\ref{inter_ineq_x_2}), we have
\begin{equation}
    1-(1-ax)(1-by)^{n_1} \ > \ \frac{1-(1-b(1-(1-ay)(1-bx)(1-bz)^{n_2}))^{n_1}}{1-a(1-b(1-(1-ay)(1-bx)(1-bz)^{n_2}))^{n_1}}
\end{equation}
as desired.
\paragraph{\newline$y$-coordinate Iteration:\newline}
The $y$-coordinate iterate must satisfy
\begin{multline}
    1-(1-ay)(1-bx)(1-bz)^{n_2} \ > \ \\
    \frac{1-(1-b(1-(1-ax)(1-by)^{n_1})(1-b(1-(1-az)(1-by))^{n_2}}{1-a(1-b(1-(1-ax)(1-by)^{n_1})(1-b(1-(1-az)(1-by))^{n_2}}.
\end{multline}
As  $z > 1-(1-az)(1-by)$, we have
\begin{equation}
1-bz \ < \ 1-b(1-(1-az)(1-by)).
\end{equation}
As $n_2 \in \mathbb{Z}^+$, we obtain
\begin{equation}
(1-bz)^{n_2} \ < \ (1-(1-(1-az)(1-by)))^{n_2}.
\end{equation}
Then, as $1-ay > 0$ and $1-bx > 0$,
\begin{equation}
(1-ay)(1-bx)(1-bz)^{n_2} \ < \ (1-ay)(1-bx)(1-b(1-(1-az)(1-by)))^{n_2},
\end{equation}
so
\begin{multline}
1-(1-ay)(1-bx)(1-bz)^{n_2} \\
\ > \ 1-(1-ay)(1-bx)(1-b(1-(1-az)(1-by)))^{n_2}. \label{inter_ineq_y_3}
\end{multline}
\newline
Furthermore, we have $y > 1-(1-ay)(1-bx)(1-bz)^{n_2}$.
Then,
\begin{align}
y &\ > \ 1-(1-ay)(1-bx)(1-bz)^{n_2} \nonumber \\
&\ > \ 1-(1-ay)(1-bx)(1-b(1-(1-az)(1-by)))^{n_2}
\end{align}
so
\begin{equation}
y \ > \ 1-(1-ay)(1-bx)(1-b(1-(1-az)(1-by)))^{n_2}.
\end{equation}
Hence, we get
\begin{equation}
    \frac{y}{1-(1-ay)(1-bx)(1-b(1-(1-az)(1-by)))^{n_2}} \ > \ 1.
\end{equation}
As
\begin{equation*}
    a(1-bx)(1-b(1-(1-az)(1-by)))^{n_2} \ > \ 0,
\end{equation*}
multiplying this to both sides of the inequality, we get
\begin{multline}
    \frac{ay(1-bx)(1-b(1-(1-az)(1-by)))^{n_2}}{1-(1-ay)(1-bx)(1-b(1-(1-az)(1-by)))^{n_2}} \\
    \ > \ a(1-bx)(1-b(1-(1-az)(1-by)))^{n_2}
\end{multline}
so
\begin{multline}
    1-\frac{ay(1-bx)(1-b(1-(1-az)(1-by)))^{n_2}}{1-(1-ay)(1-bx)(1-b(1-(1-ay)(1-by)))^{n_2}} \\
    \ < \ 1-a(1-bx)(1-b(1-(1-az)(1-by)))^{n_2}.
\end{multline}
Simplifying the left-hand side of this inequality, we get
\begin{multline*}
\frac{1-(1-ay)(1-bx)(1-b(1-(1-az)(1-by)))^{n_2}}{1-(1-ay)(1-bx)(1-b(1-(1-az)(1-by)))^{n_2}} \\
-\frac{ay(1-bx)(1-b(1-(1-az)(1-by)))^{n_2}}{1-(1-ay)(1-bx)(1-b(1-(1-az)(1-by)))^{n_2}} \\
\ < \ 1-a(1-bx)(1-b(1-(1-az)(1-by)))^{n_2}
\end{multline*}
\begin{multline*}
\frac{1+(ay-1)(1-bx)(1-b(1-(1-az)(1-by)))^{n_2}-ay(1-bx)(1-b(1-(1-az)(1-by)))^{n_2}}{1-(1-ay)(1-bx)(1-b(1-(1-az)(1-by)))^{n_2}} \\
\ < \ 1-a(1-bx)(1-b(1-(1-az)(1-by)))^{n_2}
\end{multline*}
\begin{multline}
\frac{1-(1-bx)(1-b(1-(1-az)(1-by)))^{n_2}}{1-(1-ay)(1-bx)(1-b(1-(1-az)(1-by)))^{n_2}} \\
\ < \ 1-a(1-bx)(1-b(1-(1-az)(1-by)))^{n_2}.
\end{multline}
Hence,
\begin{multline}
    1-(1-ay)(1-bx)(1-b(1-(1-az)(1-by)))^{n_2} \\
    \ > \ \frac{1-(1-bx)(1-b(1-(1-az)(1-by)))^{n_2}}{1-a(1-bx)(1-b(1-(1-az)(1-by)))^{n_2}}. \label{inter_ineq_y_1}
\end{multline}
Next, as $x > 1-(1-ax)(1-by)^{n_1}$, there exists $c > 0$ such that $0 < x-c < x$ and $x-c \ = \ 1-(1-ax)(1-by)^{n_1}$.
Then, we have
\begin{align}
    -bc &< -abc \nonumber \\
    -bc(1-b(1-(1-az)(1-by)))^{n_2} &< -abc(1-b(1-(1-az)(1-by)))^{n_2}.
\end{align}
As $a,b \in (0,1)$ and $x,y,z \in (0,1]$,
\begin{align*}
&1-a(1-b(1-(1-az)(1-by)))^{n_2}+abx(1-b(1-(1-az)(1-by)))^{n_2} \\
&-(1-b(1-(1-az)(1-by)))^{n_2}+a(1-b(1-(1-az)(1-by)))^{2n_2} \\
&-abx(1-b(1-(1-az)(1-by)))^{2n_2}+bx(1-b(1-(1-az)(1-by)))^{n_2} \\
&-abx(1-b(1-(1-az)(1-by)))^{2n_2}+ab^2x^2(1-b(1-(1-az)(1-by)))^{2n_2} \\
&+abc(1-b(1-(1-az)(1-by)))^{2n_2}-ab^2cx(1-b(1-(1-az)(1-by)))^{2n_2} \\
&-bc(1-b(1-(1-az)(1-by)))^{n_2}
\end{align*}
\begin{align*}
&\ < \ 1-a(1-b(1-(1-az)(1-by)))^{n_2}+abx(1-b(1-(1-az)(1-by)))^{n_2} \\
&-(1-b(1-(1-az)(1-by)))^{n_2}+a(1-b(1-(1-az)(1-by)))^{2n_2} \\
&-abx(1-b(1-(1-az)(1-by)))^{2n_2}+bx(1-b(1-(1-az)(1-by)))^{n_2} \\
&-abx(1-b(1-(1-az)(1-by)))^{2n_2}+ab^2x^2(1-b(1-(1-az)(1-by)))^{2n_2} \\
&+abc(1-b(1-(1-az)(1-by)))^{2n_2}-ab^2cx(1-b(1-(1-az)(1-by)))^{2n_2} \\
&-abc(1-b(1-(1-az)(1-by)))^{n_2}
\end{align*}
\begin{multline*}
    (1-(1-b(x-c))(1-b(1-(1-az)(1-by)))^{n_2}) \\
    (1-a(1-bx)(1-b(1-(1-az)(1-by)))^{n_2}) \\
    \ < \ (1-a(1-b(x-c))(1-b(1-(1-az)(1-by)))^{n_2}) \\
    (1-(1-bx)(1-b(1-(1-az)(1-by)))^{n_2})
\end{multline*}
\begin{multline}
\frac{1-(1-b(x-c))(1-b(1-(1-az)(1-by)))^{n_2}}{1-a(1-b(x-c))(1-b(1-(1-az)(1-by)))^{n_2}} \\
\ < \ \frac{1-(1-bx)(1-b(1-(1-az)(1-by)))^{n_2}}{1-a(1-bx)(1-b(1-(1-az)(1-by)))^{n_2}}.
\end{multline}
As $x-c \ = \ 1-(1-ax)(1-by)^{n_1}$, substituting this into the inequality, we get
\begin{multline}
\frac{1-(1-b(1-(1-ax)(1-by)^{n_1}))(1-b(1-(1-az)(1-by)))^{n_2}}{1-a(1-b(1-(1-ax)(1-by)^{n_1}))(1-b(1-(1-az)(1-by)))^{n_2}} \\
< \frac{1-(1-bx)(1-b(1-(1-az)(1-by)))^{n_2}}{1-a(1-bx)(1-b(1-(1-az)(1-by)))^{n_2}}. \label{inter_ineq_y_2}
\end{multline}
Hence, from (\ref{inter_ineq_y_1}) and (\ref{inter_ineq_y_2}), we get
\begin{multline}
    1-(1-ay)(1-bx)(1-b(1-(1-az)(1-by)))^{n_2} \\
    > \frac{1-(1-b(1-(1-ax)(1-by)^{n_1}))(1-b(1-(1-az)(1-by)))^{n_2}}{1-a(1-b(1-(1-ax)(1-by)^{n_1}))(1-b(1-(1-az)(1-by)))^{n_2}}. \label{inter_ineq_y_4}
\end{multline}
Furthermore, recall that from (\ref{inter_ineq_y_3}) we have
\begin{equation}
1-(1-ay)(1-bx)(1-bz)^{n_2} > 1-(1-ay)(1-bx)(1-b(1-(1-az)(1-by)))^{n_2}, \nonumber
\end{equation}
so from (\ref{inter_ineq_y_3}) and (\ref{inter_ineq_y_4}) we get
\begin{multline}
1-(1-ay)(1-bx)(1-bz)^{n_2} \\
\ > \ \frac{1-(1-b(1-(1-ax)(1-by)^{n_1}))(1-b(1-(1-az)(1-by)))^{n_2}}{1-a(1-b(1-(1-ax)(1-by)^{n_1}))(1-b(1-(1-az)(1-by)))^{n_2}}
\end{multline}
as desired.
\paragraph{\newline$z$-coordinate Iteration:\newline}
The $z$-coordinate iterate must satisfy
\begin{equation}
    1-(1-az)(1-by) \ > \ \frac{b(1-(1-ay)(1-bx)(1-bz)^{n_2})}{1-a+ab(1-(1-ay)(1-bx)(1-bz)^{n_2})}.
\end{equation}
We have $z \ > \ 1-(1-az)(1-by)$. Then
\begin{equation}
    \frac{z}{1-(1-az)(1-by)} \ > \ 1.
\end{equation}
As $aby-a < 0$,
\begin{equation}
    \frac{z(aby-a)}{1-(1-az)(1-by)} \ < \ -a+aby,
\end{equation}
so
\begin{equation}
    1+\frac{z(aby-a)}{1-(1-az)(1-by)} \ < \ 1-a+aby.
\end{equation}
Simplifying the left-hand side of this inequality, we get
\begin{align}
    \frac{1-(1-az)(1-by)}{1-(1-az)(1-by)}+\frac{z(aby-a)}{1-(1-az)(1-by)} &\ < \ 1-a+aby \nonumber \\
    \frac{az+by-abyz}{1-(1-az)(1-by)}+\frac{abyz-az}{1-(1-az)(1-by)} &\ < \ 1-a+aby \nonumber \\
    \frac{by}{1-(1-az)(1-by)} &\ < \ 1-a+aby.
\end{align}
Hence
\begin{equation}
   \frac{by}{1-a+aby} \ \leq \ 1-(1-az)(1-by). \label{inter_ineq_z_1}
\end{equation}
Next, as $y > 1-(1-ay)(1-bx)(1-bz)^{n_2}$, there exists $c > 0$ such that $0 \ < \ y-c \ < \ y$ and $y-c \ = \ 1-(1-ay)(1-bx)(1-bz)^{n_2}$.
Then, as $a,b \in (0,1)$ and $x,y \in (0,1]$, we get
\begin{align}
    abc-bc &\ < \ 0 \nonumber \\
    by-aby-ab^2y^2-ab^2cy+abc-bc &\ < \ by-aby-ab^2y^2-ab^2cy \nonumber \\
    b(y-c)(1-a+aby) &\ < \ by(1-a+ab(y-c)) \nonumber \\
    \frac{b(y-c)}{1-a+ab(y-c)} &\ < \ \frac{by}{1-a+aby} \nonumber \\
    \frac{b(1-(1-ay)(1-bx)(1-bz)^{n_2})}{1-a+ab(1-(1-ay)(1-bx)(1-bz)^{n_2})} &\ < \ \frac{by}{1-a+aby}. \label{inter_ineq_z_2}
\end{align}
Thus, from (\ref{inter_ineq_z_1}) and (\ref{inter_ineq_z_2}), we have
\begin{equation}
    1-(1-az)(1-by) \ > \ \frac{b(1-(1-ay)(1-bx)(1-bz)^{n_2})}{1-a+ab(1-(1-ay)(1-bx)(1-bz)^{n_2})}
\end{equation}
as desired.
\end{proof}

\subsubsection{Limiting Behavior}
Armed with these two lemmas, we can now examine the limiting behavior of points $(x,y,z)$ under $F$. We first look at the special case where a point is in Region I. We then extend this to make a general statement on the limiting behavior of all points under $F$, when $b < (1-a)/\sqrt{n_1+n_2}$.
\begin{lemma}
When $b \leq (1-a)/\sqrt{n_1+n_2}$, any point in Region I iterates to the trivial fixed point under $F$.
\end{lemma}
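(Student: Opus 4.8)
The plan is to run a monotone-convergence argument on the orbit of an arbitrary point of Region I. First I would fix a point $(x_0,y_0,z_0)$ in Region I and consider its iterates $(x_t,y_t,z_t) = F^t(x_0,y_0,z_0)$. Appealing to the invariance lemma (the image of Region I under $F$ is contained in Region I), every iterate again lies in Region I, so Lemma \ref{decreasing_lemma} may be applied at every step; this shows that each of the three coordinate sequences $(x_t)$, $(y_t)$, $(z_t)$ is strictly decreasing.

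Next I would observe that each coordinate sequence is bounded below by $0$, since all iterates lie in $[0,1]^3$. A monotone decreasing sequence that is bounded below converges, so the orbit converges to some limit $L = (x^*,y^*,z^*) \in [0,1]^3$. Because $F$ is a composition of polynomials and hence continuous on $[0,1]^3$, I would pass to the limit in $(x_{t+1},y_{t+1},z_{t+1}) = F(x_t,y_t,z_t)$ to conclude $L = F(L)$, i.e.\ $L$ is a fixed point of $F$. Finally, invoking Theorem \ref{3-level_theorem} I(a)---which asserts that $(0,0,0)$ is the unique fixed point of $F$ when $b \le (1-a)/\sqrt{n_1+n_2}$---I would conclude $L = (0,0,0)$, so the orbit converges to the trivial fixed point.

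This argument is short, since the two preceding lemmas and Theorem \ref{3-level_theorem} I(a) already do the heavy lifting; the only points requiring care are confirming that the limit is genuinely a fixed point and that it must be the trivial one. The first is the main thing to get right: I would lean on continuity of $F$ together with the closedness of $[0,1]^3$, emphasizing that $L$ need not lie in the interior of Region I (being the infimum of strictly decreasing coordinates, it may well sit on the boundary)---only $L \in [0,1]^3$ is needed in order to evaluate the recursion in the limit. The second is precisely where the hypothesis $b \le (1-a)/\sqrt{n_1+n_2}$ enters: it is this bound, through Theorem \ref{3-level_theorem} I(a), that excludes any nontrivial fixed point and pins the limit at the origin.
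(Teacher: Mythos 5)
Your proposal is correct and follows the same basic strategy as the paper: iterate, observe monotonicity via Lemma \ref{decreasing_lemma}, and apply the Monotone Convergence Theorem. The difference is in the endgame, and it is worth noting. The paper's own proof jumps directly from ``the coordinate sequences are non-increasing and bounded below by $0$'' to ``these sequences must converge to $0$,'' which is not what the Monotone Convergence Theorem gives---it only yields convergence to \emph{some} limit in $[0,1]^3$. Your proposal supplies exactly the two missing ingredients: (i) invariance of Region I under $F$ (Lemma \ref{contained_lemma_1}), so that Lemma \ref{decreasing_lemma} can legitimately be applied at every step of the orbit rather than only at the first; and (ii) continuity of $F$ on the closed set $[0,1]^3$, so that passing to the limit in $(x_{t+1},y_{t+1},z_{t+1}) = F(x_t,y_t,z_t)$ identifies the limit as a fixed point, which Theorem \ref{3-level_theorem} I(a) then forces to be $(0,0,0)$. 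Your remark that the limit may land on the boundary of Region I rather than inside it---so that one should not try to argue the limit is itself a point of Region I---is precisely the right caution. In short, you take the paper's route but close a real gap in it; there is nothing to fix in your argument.
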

\begin{proof}
Consider any non-trivial point $(x_0,y_0,z_0)$ in the given region. Define the sequences $x_{t+1} \ = \ f_1(x_t,y_t,z_t)$, $y_{t+1} \ = \ f_2(x_t,y_t,z_t)$, and $z_{t+1} \ = \ f_3(x_t,y_t,z_t)$. Then, by Lemma \ref{decreasing_lemma}, $\{x\}_{t \ = \ 0}^{\infty}$, $\{y\}_{t \ = \ 0}^{\infty}$, and $\{z\}_{t \ = \ 0}^{\infty}$ are non-increasing sequences. Furthermore, all three sequences are bounded below by $0$. Then, by the Monotone Convergence Theorem, these sequences must converge to $0$. Thus, $(x_0,y_0,z_0)$ iterates to $(0,0,0)$, which is the trivial fixed point.
\end{proof}
We now use this to complete our proof of Theorem 1.1, I(b). We extend the essential method which was used in [BGKMRS] to prove the limiting behavior of points in a 2-level system. Consider any cuboid in $[0,1]^3$ such that one vertex is $(0,0,0)$. Assume the vertex that is the furthest away from this one (the vertex that is across the internal diagonal of the cuboid from $(0,0,0)$) is in Region I. We show that the image of this cuboid under F is strictly contained in the cuboid by showing that the image of the point in Region I has both coordinates smaller than any other iterate. As this vertex iterates to the trivial fixed point since it is in Region I, so too do all the other points in the cuboid, as the lengths of the internal diagonals of the iterations of the cuboid tend to zero.
\begin{proof}[Proof of Theorem 1.1, I(b)]
Consider the cuboid of all points $(x,y,z) \in [0,1]^3$ with $0 \leq x \leq x_u$, $0 \leq y \leq y_u$, and $0 \leq z \leq z_u$ such that $(x_u,y_u,z_u)$ is in Region I. Note that this cuboid is able to encompass all points in $[0,1]^3$, as we can always choose $(x_u,y_u,z_u)  \ = \  (1,1,1)$. Let $p_{0,1}(x,y,z) \ = \ x$, $p_{0,2}(x,y,z) \ = \ y$, and $p_{0,3}(x,y,z) \ = \ z$. Define the sequence
\begin{equation}
    p_t(x,y,z) \ = \ (p_{t,1}(x,y,z),p_{t,2}(x,y,z),p_{t,3}(x,y,z))
\end{equation}
by
\begin{align}
    p_{t+1,1} \ = \ f_1(p_{t,1}(x,y,z),p_{t,2}(x,y,z),p_{t,3}(x,y,z))  \nonumber \\
    p_{t+1,2} \ = \ f_2(p_{t,1}(x,y,z),p_{t,2}(x,y,z),p_{t,3}(x,y,z)) \nonumber \\
    p_{t+1,3} \ = \ f_3(p_{t,1}(x,y,z),p_{t,2}(x,y,z),p_{t,3}(x,y,z)).
\end{align}
We prove by induction that
\begin{align}
    p_{t,1}(0,0,0) \ \leq \ p_{t,1}(x,y,z) \ \leq \ p_{t,1}(x_u,y_u,z_u) \nonumber \\
    p_{t,2}(0,0,0) \ \leq \ p_{t,2}(x,y,z) \ \leq \ p_{t,2}(x_u,y_u,z_u) \nonumber \\
    p_{t,3}(0,0,0) \ \leq \ p_{t,3}(x,y,z) \ \leq \ p_{t,3}(x_u,y_u,z_u).
\end{align}
As our base case follows from any choice of $(x_u,y_u,z_u)$, we proceed to the inductive step. Suppose that $p_{t,1}(x_u,y_u,z_u) \ \geq \ p_{t,1}(x,y,z)$ and $p_{t,2}(x_u,y_u,z_u) \ \geq \ p_{t,2}(x,y,z)$ and $p_{t,3}(x_u,y_u,z_u) \ \geq \ p_{t,3}(x,y,z)$.
\paragraph{\newline$x$-coordinate:\newline}
As $a,b \in (0,1)$, we have
\begin{equation}
    1-ap_{t,1}(x_u,y_u,z_u) \ \leq \ 1-ap_{t,1}(x,y,z)
\end{equation}
and
\begin{equation}
    1-bp_{t,2}(x_u,y_u,z_u) \ \leq \ 1-bp_{t,2}(x,y,z).
\end{equation}
If follows that
\begin{equation}
    (1-ap_{t,1}(x_u,y_u,z_u))(1-bp_{t,2}(x_u,y_u,z_u))^{n_1} \ \leq \ (1-ap_{t,1}(x,y,z))(1-bp_{t,2}(x,y,z))^{n_1}
\end{equation}
so
\begin{multline}
    1-(1-ap_{t,1}(x_u,y_u,z_u))(1-bp_{t,2}(x_u,y_u,z_u))^{n_1} \\
    \ \geq \ 1-(1-ap_{t,1}(x,y,z))(1-bp_{t,2}(x,y,z))^{n_1}.
\end{multline}
Furthermore, as the trivial fixed point stays the same upon iteration by $F$, we have
\begin{equation}
    1-ap_{t,1}(0,0,0)  \ = \  1 \ \geq \ ap_{t,1}(x,y,z)
\end{equation}
and
\begin{equation}
    1-bp_{t,2}(0,0,0)  \ = \  1 \ \geq \ 1-bp_{t,2}(x,y,z).
\end{equation}
Hence
\begin{equation}
    1-(1-ap_{t,1}(0,0,0))(1-bp_{t,2}(0,0,0))^{n_1} \ \leq \ 1-(1-ap_{t,1}(x,y,z))(1-bp_{t,2}(x,y,z))^{n_1},
\end{equation}
and therefore
\begin{equation}
    p_{t+1,1}(0,0,0) \ \leq \ p_{t+1,1}(x,y,z) \ \leq \ p_{t+1,1}(x_u,y_u,z_u).
\end{equation}
\paragraph{\newline$y$-coordinate:\newline}
As $a,b \in (0,1)$, we have
\begin{equation}
    1-ap_{t,2}(x_u,y_u,z_u) \ \leq \ 1-ap_{t,2}(x,y,z)
\end{equation}
and
\begin{equation}
    1-bp_{t,1}(x_u,y_u,z_u) \ \leq \ 1-bp_{t,1}(x,y,z)
\end{equation}
and
\begin{equation}
    1-bp_{t,3}(x_u,y_u,z_u) \ \leq \ 1-bp_{t,3}(x,y,z).
\end{equation}
It follows that
\begin{multline}
    (1-ap_{t,2}(x_u,y_u,z_u))(1-bp_{t,1}(x_u,y_u,z_u))(1-bp_{t,3}(x_u,y_u,z_u))^{n_2} \\
    \ \leq \ (1-ap_{t,2}(x,y,z))(1-bp_{t,1}(x,y,z))(1-bp_{t,3}(x,y,z))^{n_2},
\end{multline}
so
\begin{multline}
    1-(1-ap_{t,2}(x_u,y_u,z_u))(1-bp_{t,1}(x_u,y_u,z_u))(1-bp_{t,3}(x_u,y_u,z_u))^{n_2} \\
    \ \geq \ 1-(1-ap_{t,2}(x,y,z))(1-bp_{t,1}(x,y,z))(1-bp_{t,3}(x,y,z))^{n_2}.
    \end{multline}
Furthermore, as the trivial fixed point stays the same upon iteration by $F$, we have
\begin{equation}
    1-ap_{t,2}(0,0,0)  \ = \  1 \ \geq \ ap_{t,2}(x,y,z)
\end{equation}
and
\begin{equation}
    1-bp_{t,1}(0,0,0)  \ = \  1 \ \geq \ 1-bp_{t,1}(x,y,z)
\end{equation}
and
\begin{equation}
    1-bp_{t,3}(0,0,0)  \ = \  1 \ \geq \ 1-bp_{t,3}(x,y,z).
\end{equation}
Hence
\begin{multline}
    1-(1-ap_{t,2}(0,0,0))(1-bp_{t,1}(0,0,0))(1-bp_{t,3}(0,0,0))^{n_2} \\
    \ \leq \ 1-(1-ap_{t,2}(x,y,z))(1-bp_{t,1}(x,y,z))(1-bp_{t,3}(x,y,z))^{n_2},
\end{multline}
and therefore
\begin{equation}
    p_{t+1,2}(0,0,0) \ \leq \ p_{t+1,2}(x,y,z) \ \leq \ p_{t+1,2}(x_u,y_u,z_u).
\end{equation}
\paragraph{\newline$z$-coordinate:\newline}
As $(a,b) \in (0,1)$, we have
\begin{equation}
    1-ap_{t,3}(x_u,y_u,z_u) \ \leq \ 1-ap_{t,3}(x,y,z)
\end{equation}
and
\begin{equation}
    1-bp_{t,2}(x_u,y_u,z_u) \ \leq \ 1-bp_{t,2}(x,y,z).
\end{equation}
It follows that
\begin{equation}
    (1-ap_{t,3}(x_u,y_u,z_u))(1-bp_{t,2}(x_u,y_u,z_u)) \ \leq \ (1-ap_{t,3}(x,y,z))(1-bp_{t,2}(x,y,z))
\end{equation}
so
\begin{equation}
    1-(1-ap_{t,3}(x_u,y_u,z_u))(1-bp_{t,2}(x_u,y_u,z_u)) \ \geq \ 1-(1-ap_{t,3}(x,y,z))(1-bp_{t,2}(x,y,z)).
\end{equation}
Furthermore, as the trivial fixed point stays the same upon iteration by $F$, we have
\begin{equation}
    1-ap_{t,3}(0,0,0) \ \geq \ 1-ap_{t,3}(x,y,z)
\end{equation}
and
\begin{equation}
    1-bp_{t,2}(0,0,0) \ \geq \ 1-bp_{t,2}(x,y,z).
\end{equation}
Hence
\begin{equation}
    1-(1-ap_{t,3}(0,0,0))(1-bp_{t,2}(0,0,0)) \ \leq \ 1-(1-ap_{t,3}(x,y,z))(1-bp_{t,2}(x,y,z)).
\end{equation}
Thus
\begin{equation}
    p_{t+1,3}(0,0,0) \ \leq \ p_{t+1,3}(x,y,z) \ \leq \ p_{t+1,3}(x_u,y_u,z_u).
\end{equation}
Therefore, the claimed inequalities hold for all $(x,y,z) \in [0,1]^3$ by induction.
\paragraph{\newline Limiting behavior:\newline}
Now, taking the limit, we get
\begin{equation}
    \lim_{t \to \infty}p_{t,1}(0,0,0) \ \leq \ \lim_{t \to \infty}p_{t,1}(x,y,z) \ \leq \ \lim_{t \to \infty}p_{t,1}(x_u,y_u,z_u),
\end{equation}
\begin{equation}
    \lim_{t \to \infty}p_{t,2}(0,0,0) \ \leq \ \lim_{t \to \infty}p_{t,2}(x,y,z) \ \leq \ \lim_{t \to \infty}p_{t,2}(x_u,y_u,z_u),
\end{equation}
\begin{equation}
    \lim_{t \to \infty}p_{t,3}(0,0,0) \ \leq \ \lim_{t \to \infty}p_{t,3}(x,y,z) \ \leq \ \lim_{t \to \infty}p_{t,3}(x_u,y_u,z_u).
\end{equation}
As $(0,0,0)$ is the trivial fixed point under $F$ and $(x_u,y_u,z_u)$ is in Region I, from Lemma \ref{contained_lemma_1} we obtain
\begin{equation}
    0 \ \leq \ \lim_{t \to \infty}p_{t,1}(x,y,z) \ \leq \ 0,
\end{equation}
\begin{equation}
    0 \ \leq \ \lim_{t \to \infty}p_{t,2}(x,y,z) \ \leq \ 0,
\end{equation}
\begin{equation}
    0 \ \leq \ \lim_{t \to \infty}p_{t,3}(x,y,z) \ \leq \ 0.
\end{equation}
Thus,
\begin{align}
    &\lim_{t \to \infty}p_{t,1}(x,y,z)  \ = \  0, \\
    &\lim_{t \to \infty}p_{t,2}(x,y)  \ = \  0, \\
    &\lim_{t \to \infty}p_{t,3}(x,y,z)  \ = \  0,
\end{align}
that is,
\begin{equation}
    \lim_{t \to \infty} p_t(x,y,z)  \ = \  (0,0,0).
\end{equation}
Therefore, when $b \ \leq \ (1-a)/\sqrt{n_1+n_2}$, all points $(x,y,z) \in [0,1]^3$ iterate to the trivial fixed point under $F$.
\end{proof}

The goal of this section was to highlight the proof method of specifying ``regions'' in $[0,1]^2$ induced by the two partial fixed point curves, and then using the squeeze theorem along with properties of points within the induced regions to show convergence to a fixed point. The proof of Theorem 3.2, II(b) is analogous to the method above to prove Theorem 3.2, I(b).
\end{subequations}

\begin{subequations}
\section{Extension to $k$-level Starlike Graphs}
\subsection{Shape of $F$ for $k$-level Starlike Graphs}
\hfill\\

So far, we have explored the characteristics and limiting behavior of this system when applied to $3$-level starlike graphs. We now examine the same system when applied to a starlike graph with an arbitrary number of levels, in other words, a $k$-level starlike graph. We keep the simplifying assumption that at each level, the number of spokes is the same. For the earlier $3$-level case, we considered a graph with $n_1$ $2$-level spoke nodes around a hub node, and $n_2$ $3$-level spoke nodes around each of the $n_1$ $2$-level spoke nodes. In a $k$-level system, we assume there are $n_{k+1}$ $(k+1)$-level spoke nodes connected to each $k$-level spoke node. Similar to the $3$-level case, by Lemma \ref{common_behavior_lemma}, all nodes on the same level approach a common limiting value. Hence, in the $k$-level case, we have a system in $k$ unknowns. Let $d_1$ be the probability that the central hub node is infected, and let $d_2, \dots, d_k$ be the probabilities that $2, \dots, k$-level spoke nodes are infected. Then, we get the following system:
$$F\begin{pmatrix}
d_1\\
d_2\\
d_3\\
\vdots \\
d_k
\end{pmatrix} \ = \ \begin{pmatrix}
f_1(d_1,\dots,d_k) \\
f_2(d_1,\dots,d_k)\\
f_3(d_1,\dots,d_k)\\
\vdots\\
f_k(d_1,\dots,d_k)
\end{pmatrix} \ = \ \begin{pmatrix}
1-(1-ad_1)(1-bd_2)^{n_1}\\
1-(1-ad_2)(1-bd_1)(1-bd_3)^{n_2}\\
1-(1-ad_3)(1-bd_2)(1-bd_4)^{n_3}\\
\vdots\\
1-(1-ad_k)(1-bd_{k-1})
\end{pmatrix}.
$$
Note the similarities to the $3$-level system. In both systems, $f_1$ are described by functions of the same form, while $f_2$ in the $3$-level system is similar to $f_2, \dots, f_{k-1}$ in the $k$-level system. $f_3$ in the $3$-level system looks a lot like $f_k$ in the $k$-level system as well. In other words, as we add more levels, the iterates of the first and last variables stay the same, while all iterates of the variables between the two are described by a common function shape. This allows us to examine the $k$-level system in a similar way to that of the $3$-level system, essentially "collapsing" all of the middle iterates into one group to be analyzed simultaneously. The findings from the $3$-level case are extended to the following.
\begin{theorem} \label{k-level_thr}
\begin{enumerate}[I.]
\item
If $b \leq (1-a)/\sqrt{n_1+\dots+n_{k-1}}$, then
\begin{enumerate}[(a)]
\item
the unique fixed point of $F$ is $(0,0,\dots,0)$.
\item
the system converges to this fixed point, in other words, the virus dies out.
\end{enumerate}
\item
If $b > (1-a)/\sqrt{n_1+\dots+n_{k-1}}$, then
\begin{enumerate}[(a)]
\item
$F$ has a unique, non-trivial fixed point $({d_1}_f,{d_2}_f,\dots,{d_k}_f)$.
\item
the system converges to this non-trivial fixed point.
\end{enumerate}
\end{enumerate}
\end{theorem}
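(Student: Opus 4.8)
The plan is to mirror, level by level, the argument already carried out for the $3$-level graph, exploiting the observation that the middle iterates $f_2,\dots,f_{k-1}$ all share the common shape $1-(1-ad_m)(1-bd_{m-1})(1-bd_{m+1})^{n_m}$. First I would extend Lemma \ref{3_level_fixed_points_lemma} to produce the full family of partial fixed point functions: $\phi_1(d_2)=\frac{1-(1-bd_2)^{n_1}}{1-a(1-bd_2)^{n_1}}$ (convex, a function of $d_2$ alone), the final relation $\phi_k(d_{k-1})=\frac{bd_{k-1}}{1-a+abd_{k-1}}$ (concave, a function of $d_{k-1}$ alone), and for each middle index $2\le m\le k-1$ the relation $\phi_m(d_{m-1},d_{m+1})=\frac{1-(1-bd_{m-1})(1-bd_{m+1})^{n_m}}{1-a(1-bd_{m-1})(1-bd_{m+1})^{n_m}}$, which is concave and non-decreasing in each argument. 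Because the middle functions are all of the same form, the monotonicity and concavity computations are \emph{verbatim} the ones in the proof of Lemma \ref{3_level_fixed_points_lemma}; no new estimate is needed here.

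Next I would collapse the chain to a single-variable comparison, exactly as $\phi_2$ and $\phi_3$ were collapsed to $\phi_{2,3}$. Fixing $d_2$ determines $d_1=\phi_1(d_2)$, and the downstream relations $d_m=\phi_m(d_{m-1},d_{m+1})$ for $m\ge 3$, together with $d_k=\phi_k(d_{k-1})$, form a chain that I would solve, by the implicit function theorem together with a monotonicity (shooting) argument run inward from level $k$, to express each of $d_3,\dots,d_k$ as a function of $d_2$. Substituting $d_3=d_3(d_2)$ into the $d_2$-relation solved for $d_1$ yields a composite $\Phi(d_2)$ playing the role of $\phi_{2,3}$. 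I would then invoke Lemma \ref{concavity_lemma} repeatedly along the chain to show that the eliminated functions $d_3(d_2),\dots,d_k(d_2)$ are concave and non-decreasing, hence that $\Phi$ is concave while $\phi_1$ is convex. Establishing concavity of this $(k-2)$-fold composite, in particular tracking concavity through the implicit elimination of the interior variables, is the first place where real work beyond the $3$-level case is required.

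With $\Phi$ concave and $\phi_1$ convex and both vanishing at the origin, the count of fixed points would follow from Lemmas \ref{bgkmrs_lemma} and \ref{bgkmrs_extended_lemma}: below the threshold one shows $\Phi'(0)>\phi_1'(0)$, so the curves meet only at $0$ (part I(a)); above the threshold the boundary behaviour forces a second crossing, unique by Lemma \ref{bgkmrs_extended_lemma} (part II(a)). \textbf{This slope comparison is the main obstacle, and it is where the stated threshold must be scrutinized.} Linearizing the whole chain at the origin (write $r=b/(1-a)$) gives $\phi_1'(0)=n_1 r$ and reduces the inequality $\Phi'(0)>\phi_1'(0)$ to the condition $1/r>\lambda_{\max}(B)$, where $B$ is the weighted tridiagonal ``path'' matrix with $B_{1,2}=n_1$, $B_{m,m-1}=1$, $B_{m,m+1}=n_m$ and $B_{k,k-1}=1$; equivalently the threshold is $b=(1-a)/\lambda_{\max}(B)$. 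For $k=3$ one has $\lambda_{\max}(B)=\sqrt{n_1+n_2}$, recovering the claimed bound. For $k\ge 4$, however, the characteristic polynomial of $B$ picks up cross terms (already at $k=4$ the comparison becomes $1-(n_1+n_2+n_3)r^2+n_1n_3 r^4>0$), so in general $\lambda_{\max}(B)<\sqrt{n_1+\dots+n_{k-1}}$ strictly. Consequently $b\le (1-a)/\sqrt{n_1+\dots+n_{k-1}}$ remains \emph{sufficient} for the origin to be the unique fixed point (part I survives), but the \emph{sharp} threshold governing the appearance of the non-trivial fixed point in part II is $(1-a)/\lambda_{\max}(B)$ rather than $(1-a)/\sqrt{n_1+\dots+n_{k-1}}$; reconciling this discrepancy is the crux I would have to resolve before part II could be asserted in the stated form.

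Finally, for the convergence statements I(b) and II(b) I would reuse the region-based machinery wholesale. I would define Region I by the $k$ inequalities asserting that each $d_i$ exceeds its partial fixed point surface, extend Lemma \ref{decreasing_lemma} to show every coordinate strictly decreases on Region I, and extend Lemma \ref{contained_lemma_1} to show Region I is forward invariant; monotone convergence then sends any orbit starting in Region I to the relevant fixed point. The box-squeeze induction from the proof of Theorem \ref{3-level_theorem} I(b) carries over coordinate by coordinate, since each $f_i$ is monotone in the variables on which it depends, pinning every orbit between the constant orbit at the fixed point and an orbit that has entered Region I. These steps are structurally identical to the $3$-level arguments, and I expect them to present no new difficulty beyond bookkeeping.
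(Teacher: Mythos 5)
Your proposal retraces exactly the route the paper takes: eliminate $d_k,d_{k-1},\dots,d_3$ from the bottom up to get a single composite curve (this is the paper's Lemma \ref{one_var_lemma}), prove concavity of the composite by iterating Lemma \ref{concavity_lemma} (the paper's Lemma \ref{comp_concavity_lemma}), then appeal to the convex-vs-concave slope comparison at the origin (Lemmas \ref{bgkmrs_lemma} and \ref{bgkmrs_extended_lemma}) and to the region/squeeze machinery for convergence. But the crux you flagged is real, and it is a gap in the \emph{paper}, not in your proposal. The paper's entire proof of Theorem \ref{k-level_thr} consists of those two lemmas followed by the sentence that existence, uniqueness and convergence can be ``proved in the same manner as the $3$-level case''; the slope of the composite at the origin -- the one place where the numerical value of the threshold actually enters -- is never computed for $k\ge 4$. (Your side remark is also correct that Lemma \ref{one_var_lemma} treats an implicit relation as if substitution alone defined the composite; that defect is repairable, but the threshold issue is not.)

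Your linearization checks out. Writing $r=b/(1-a)$ and eliminating upward, the linearized relations give $d_k=rd_{k-1}$ and $d_m=c_md_{m-1}$ with $c_k=r$, $c_m=r/(1-rn_mc_{m+1})$, so the composite curve has slope $1/c_2$ (as $d_1=\Phi(d_2)$) against $\phi_1'(0)=n_1r$; the no-crossing criterion $n_1rc_2\le 1$ of Lemma \ref{bgkmrs_lemma} unwinds to $\det(I-rB)\ge 0$, whose smallest positive root is $r=1/\lambda_{\max}(B)$ -- for $k=4$ precisely your polynomial $1-(n_1+n_2+n_3)r^2+n_1n_3r^4$. For $k=3$ this root is $1/\sqrt{n_1+n_2}$, matching Theorem \ref{3-level_theorem}, but for $k\ge 4$ one has $\lambda_{\max}(B)<\sqrt{n_1+\dots+n_{k-1}}$ strictly: e.g.\ $k=4$, $n_1=n_2=n_3=1$ (the path on four vertices) gives $\lambda_{\max}=(1+\sqrt5)/2$, so the non-trivial fixed point first appears at $r=(\sqrt5-1)/2\approx 0.618$, not at $1/\sqrt3\approx 0.577$. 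For $r$ in the intervening window the concavity/convexity argument itself (Lemma \ref{bgkmrs_lemma}) shows the origin is the \emph{only} fixed point, so part II(a) of Theorem \ref{k-level_thr} is false as stated for $k\ge4$, while part I survives because the stated bound sits below the sharp one. Your refusal to assert part II at the stated threshold is therefore exactly right, and the correct repair is the one you identify: replace $\sqrt{n_1+\dots+n_{k-1}}$ by $\lambda_{\max}(B)$ (the spectral radius of the level-quotient adjacency matrix) in both halves of the theorem.
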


\subsection{Additional Convexity Arguments for Extension to $k$-level}
\hfill\\

As in the $3$-level case, we first determine the location and number of fixed points. We look for partial fixed points by solving the equations
\begin{align}
d_1 & \ = \  f_1(d_1,\dots,d_k) \nonumber \\
d_2 & \ = \  f_2(d_1,\dots,d_k) \nonumber \\
\vdots \nonumber \\
d_k & \ = \  f_k(d_1,\dots,d_k).
\end{align}
This results in the $\phi$ functions
\begin{equation}
\phi_1(d_2,\dots,d_k)  \ = \  d_1  \ = \  \frac{1-(1-bd_2)^{n_1}}{1-a(1-bd_2)^{n_1}}.
\end{equation}
For all levels $2 \leq m \leq k-1$,
\begin{equation}
\phi_m(d_1,\dots,d_{m-1},d_{m+1},\dots,d_k)  \ = \  d_m  \ = \  \frac{1-(1-bd_{m-1})(1-bd_{m+1})^{n_m}}{1-a(1-bd_{m-1})(1-bd_{m+1})^{n_m}},
\end{equation}
and for the $k$th level
\begin{equation}
    \phi_k(d_1,\dots,d_{k-1})  \ = \  d_k  \ = \  \frac{bd_{k-1}}{1-a+abd_{k-1}}.
\end{equation}
We then take the compositions $d_1  \ = \  \phi_{1,2,\dots,k}  \ = \  \phi_1$ and $d_2  \ = \ \phi_{2,3,\dots,k}  \ = \  \phi_2 \circ \cdots \circ \phi_k$ to reduce the problem to a two-dimensional one. We now would like to determine the concavity of these two curves. However, complications arise due to the composition of several $\phi$'s. Hence, additional arguments are required.
\begin{lemma} \label{one_var_lemma}
When $k \geq 3$, the composition $\phi_{k-m}(d_1,d_3,\dots,d_k) \circ \cdots \circ \phi_k(d_1,\dots,d_{k-1})$ is a function of one variable $d_{k-m-1}$ for all positive integers $1\leq m \leq k-2$.
\end{lemma}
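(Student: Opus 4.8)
The plan is to induct on the length $m$ of the chain, exploiting the nearest-neighbor structure of the maps: each middle map $\phi_j$ depends on the higher-level variables only through its immediate successor $d_{j+1}$, and $\phi_k$ depends only on $d_{k-1}$. This locality is exactly what forces each substitution step to collapse the number of free variables by one, and it is the structural fact I would isolate first.

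For the base case $m=1$, I would start from $\phi_k(d_{k-1}) = d_k = bd_{k-1}/(1-a+abd_{k-1})$, which is already a function of the single variable $d_{k-1}$. Since $\phi_{k-1}$ involves the higher-level variables only through $d_k$, substituting this expression into $\phi_{k-1}$ eliminates $d_k$ and produces a relation involving only $d_{k-2}$ and $d_{k-1}$. Solving this relation for $d_{k-1}$ in terms of $d_{k-2}$ (equivalently, solving explicitly for $d_{k-2}$ exactly as in the derivation of $\phi_{2,3}$ and then inverting) yields $d_{k-1}$ as a function of the single variable $d_{k-2}$, which is precisely $\phi_{k-1}\circ\phi_k$.

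For the inductive step, assume $\phi_{k-m}\circ\cdots\circ\phi_k$ expresses $d_{k-m}$ as a function of the single variable $d_{k-m-1}$. To handle $\phi_{k-m-1}\circ\cdots\circ\phi_k$, note that $\phi_{k-m-1}$ depends on $d_{k-m-2}$, $d_{k-m-1}$, and $d_{k-m}$ only; among the higher-level variables it involves just $d_{k-m}$. Substituting the inductive expression $d_{k-m} = (\phi_{k-m}\circ\cdots\circ\phi_k)(d_{k-m-1})$ into $\phi_{k-m-1}$ replaces $d_{k-m}$ by a function of $d_{k-m-1}$; since the earlier stages have already removed $d_{k-m+1},\dots,d_k$, only $d_{k-m-2}$ and $d_{k-m-1}$ remain. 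This leaves a single relation between two consecutive coordinates, which defines $d_{k-m-1}$ as a function of $d_{k-m-2}$, i.e. the composition $\phi_{k-m-1}\circ\cdots\circ\phi_k$, completing the induction over $1 \le m \le k-2$ and terminating at $\phi_{2,3,\dots,k}$, which expresses $d_2$ as a function of $d_1$.

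The step needing care is confirming that the two-variable relation obtained after each substitution is genuinely single-valued, so that it defines a \emph{function} and not merely a curve; I expect this to be the main (though modest) obstacle. It can be dispatched as in Lemma \ref{3_level_fixed_points_lemma}: solve $\phi_{k-m-1}$ explicitly for the lower coordinate $d_{k-m-2}$, obtaining an explicit expression in $d_{k-m-1}$, and then use the monotonicity of $\phi_{k-m-1}$ in each argument (non-decreasing, of the form established in Lemma \ref{3_level_fixed_points_lemma}) together with the monotonicity of the inductive one-variable function to conclude that this explicit map is strictly monotone, hence invertible on $[0,1]$. Inverting gives the desired single-valued function of one variable, and the nearest-neighbor dependence guarantees that no eliminated coordinate can re-enter at a later stage.
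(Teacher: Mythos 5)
Your proposal matches the paper's proof essentially step for step: the same induction on $m$, with the nearest-neighbor structure of the $\phi_j$'s (each middle $\phi_j$ sees the higher levels only through $d_{j+1}$, and $\phi_k$ depends only on $d_{k-1}$) guaranteeing that each substitution leaves only the two consecutive coordinates $d_{k-m-2}$ and $d_{k-m-1}$ in the relation. Your closing paragraph on single-valuedness goes beyond the paper, which simply asserts that a relation in two variables ``can be written as a function'' of the lower one; that extra care is reasonable, but note that the strict monotonicity you invoke can fail — in the $3$-level analogue the explicit solve for the lower coordinate is $\phi_{2,3}$, and $\phi_{2,3}'(0) = \bigl((1-a)^2 - b^2 n_2\bigr)/\bigl(b(1-a)\bigr) < 0$ whenever $b > (1-a)/\sqrt{n_2}$, so inversion-via-monotonicity cannot be the full argument in that parameter regime.
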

\begin{proof}
We prove this by induction. For our base case $m \ = \ 1$, we have
\begin{equation}
    \phi_k(d_1,\dots,d_{k-1})  \ = \  d_k  \ = \  \frac{bd_{k-1}}{1-a+abd_{k-1}}
\end{equation}
and
\begin{equation}
   \phi_{k-1}(d_1,\dots,d_{l-1},d_{l+1},\dots,d_k)  \ = \  d_{k-1}  \ = \  \frac{1-(1-bd_{k-2})(1-bd_k)^{n_{k-1}}}{1-a(1-bd_{k-2})(1-bd_k)^{n_{k-1}}}.
\end{equation}
Substituting $d_k$ into $ \phi_{k-1}(d_1,\dots,d_{l-1},d_{l+1},\dots,d_k)$, we obtain
\begin{equation}
    \phi_{k-1}(d_1,\dots,d_{l-1},d_{l+1},\dots,d_k)  \ = \  d_{k-1}  \ = \  \frac{1-(1-bd_{k-2})(1-b(\frac{bd_{k-1}}{1-a+abd_{k-1}}))^{n_{k-1}}}{1-a(1-bd_{k-2})(1-b(\frac{bd_{k-1}}{1-a+abd_{k-1}}))^{n_{k-1}}}.
\end{equation}
Note that the equation contains only the variables $d_{k-1}$ and $d_{k-2}$, so the composition of $\phi_{k-1}$ and $\phi_k$ can be written as a function of $d_{k-2}$. Hence, the claim holds for the base case.
Now suppose the claim holds for $m \ = \ j$. For $m \ = \ j+1$, we have
\begin{multline}
    \phi_{k-(j+1)}(d_1,\dots,d_{k-j-2},d_{k-j},\dots,d_k)  \ = \  d_{k-(j+1)}  \ = \  d_{k-j-1} \\
     \ = \  \frac{1-(1-bd_{k-(j+1)-1})(1-bd_{k-(j+1)+1})^{n_{k-(j+1)}}}{1-a(1-bd_{k-(j+1)-1})(1-bd_{k-(j+1)+1})^{n_{k-(j+1)}}} \\
     \ = \  \frac{1-(1-bd_{k-j-2})(1-bd_{k-j})^{n_{k-j-1}}}{1-a(1-bd_{k-j-2})(1-bd_{k-j})^{n_{k-j-1}}}. \label{phi_k-j-1}
\end{multline}
From our inductive hypothesis, as $\phi_{k-j} \circ \cdots \circ \phi_k = d_{k-j}$ can be written as a function of $d_{k-j-1}$, substituting this into (\ref{phi_k-j-1}) in place of $d_{k-j}$ results in an equation with two variables $d_{k-j-1}$ and $d_{k-j-2}$, so the composite function $\phi_{k-(j+1)} \circ \cdots \circ \phi_k  \ = \  d_{k-j-1}$ can be written as a function of $d_{k-j-2} = d_{k-(m+1)-1}$, as desired.
\end{proof}
We now know that $\phi_{2,\dots,k}$ can be written as a function of $d_1$. Then, as in the $3$-level case, we can draw the curves $d_1 = \phi_1(d_2)$ and $d_2 = \phi_{2,\dots,k}(d_1)$ in $\mathbb{R}^2$. The intersections of these two curves are the desired fixed points. Furthermore, as $\phi_2, \dots, \phi_{k-1}$ have the same functional shape as $\phi_2$ in the $3$-level case,we appeal to our results in Lemma \ref{3_level_fixed_points_lemma} to conclude that $\phi_2, \dots, \phi_{k-1}$ are functions that are non-decreasing in each argument (see (\ref{non-decreasing_calc_start}) $\sim$ (\ref{non-decreasing_calc_end})), and are concave. Since $\phi_1$ from the $3$-level case is equivalent to that in the $k$-level case, and $\phi_3$ in the $3$-level case is equivalent to $\phi_k$ in the $k$-level case, we also know that $\phi_1$ is convex, and $\phi_k$ is concave. We can now use these facts along with Lemma \ref{concavity_lemma} to show that the curve $d_2 = \phi_{2,\dots,k}(d_1)$ is concave.
\begin{lemma} \label{comp_concavity_lemma}
Let $k \geq 3 \in \mathbb{R}$. Then, for all integers $m \in [1,k-2]$, the composition $\phi_{k-m} \circ \cdots \circ \phi_k  \ = \  \phi_{2,\dots,k}$ is concave.
\end{lemma}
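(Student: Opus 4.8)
The plan is to induct on $m$, peeling off one factor of the composition at a time starting from the innermost map $\phi_k$ and working outward, invoking Lemma \ref{concavity_lemma} at each stage. The needed ingredients are already assembled: by Lemma \ref{3_level_fixed_points_lemma} (applied to the common functional form shared by $\phi_2,\dots,\phi_{k-1}$), each middle map $\phi_{k-m-1}(d_{k-m-2},d_{k-m})$ is concave and non-decreasing in each of its two arguments, while $\phi_k$ is a concave function of the single variable $d_{k-1}$. Crucially, Lemma \ref{one_var_lemma} guarantees that after composing, $\phi_{k-m}\circ\cdots\circ\phi_k$ collapses to a function of the single variable $d_{k-m-1}$; this is exactly what lets the inner slots $g_1,g_2$ of Lemma \ref{concavity_lemma} be filled by one-variable functions.

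For the base case $m=1$, I would write $\phi_{k-1}\circ\phi_k=\phi_{k-1}(d_{k-2},\phi_k(d_{k-1}))$ and apply Lemma \ref{concavity_lemma} with $f=\phi_{k-1}$, with $g_1$ the identity on $d_{k-2}$ (trivially concave) and $g_2=\phi_k$ (concave); since $\phi_{k-1}$ is concave and non-decreasing in each argument, the composite is concave. For the inductive step, I assume $\phi_{k-m}\circ\cdots\circ\phi_k$ is a concave function of $d_{k-m-1}$, substitute it into the second argument of $\phi_{k-m-1}(d_{k-m-2},d_{k-m})$, and apply Lemma \ref{concavity_lemma} again, now with $f=\phi_{k-m-1}$, $g_1$ the identity on $d_{k-m-2}$, and $g_2=\phi_{k-m}\circ\cdots\circ\phi_k$. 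Concavity of the new composite follows, and by Lemma \ref{one_var_lemma} it is a function of $d_{k-m-2}=d_{k-(m+1)-1}$, which feeds the next step. Taking $m=k-2$ yields concavity of $\phi_2\circ\cdots\circ\phi_k=\phi_{2,\dots,k}$, the claim.

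The step that requires the most care is reconciling the formal composition with the single-variable reduction of Lemma \ref{one_var_lemma}. Each intermediate composite $d_{k-m}=\phi_{k-m}\circ\cdots\circ\phi_k$ is produced by an implicit substitution, in which the output variable reappears inside the chain; so to feed it into Lemma \ref{concavity_lemma} as a genuine one-variable concave function $g_2$, one must know both that the reduction of Lemma \ref{one_var_lemma} yields a well-defined function of $d_{k-m-1}$ and that this function is concave, the latter being precisely the inductive hypothesis. The monotonicity of $\phi_2,\dots,\phi_{k-1}$ in each argument—the extension of \cite{bgkmrs} established in Lemma \ref{3_level_fixed_points_lemma}—is indispensable here, since it is what allows Lemma \ref{concavity_lemma} to pass concavity through the outer map; without it the induction breaks. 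As in the $3$-level computation leading to (\ref{phi_2_3}), one could alternatively make each reduction explicit by solving the defining relation of $\phi_{k-m-1}$ for its trailing argument before substituting, thereby sidestepping the implicit bookkeeping at the cost of heavier algebra.
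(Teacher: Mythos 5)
Your proof follows essentially the same route as the paper's: induction on $m$, peeling off one $\phi$ at a time and applying Lemma \ref{concavity_lemma} with $f$ the outer middle map (concave and non-decreasing by Lemma \ref{3_level_fixed_points_lemma}), $g_1$ the identity, and $g_2$ the previously-built concave composite, with Lemma \ref{one_var_lemma} supplying the one-variable reduction. Your version is, if anything, slightly more explicit than the paper's (which never names the identity map as $g_1$ or flags the implicit-substitution issue), but the argument is the same.
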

\begin{proof}
We prove this by induction. We have that $\phi_{k-1}$ is a concave function defined on $[0,1]^2 \to [0,1]$ that is non-decreasing in each argument. We also have that $\phi_k$ is a concave function defined on $[0,1] \to [0,1]$. Then, from Lemma \ref{concavity_lemma}, $\phi_{k-1,k}$ is concave. From Lemma \ref{one_var_lemma}, we also have that this is a function depending on $d_{k-2}$, so it is a function defined on $[0,1] \to [0,1]$.
Now, suppose that $\phi_{k-m,k}$ is a concave function. Then, as $\phi_{k-m-1}$ is a concave function on $[0,1]^2 \to [0,1]$ that is non-decreasing in each argument and $\phi_{k-m,k}$ is a concave function defined on $[0,1] \to [0,1]$, from Lemma \ref{concavity_lemma}, $\phi_{k-m-1,k}$ is concave as well. Therefore, our claim is proven.
\end{proof}
We now know that $\phi_{2,\dots,k}$ is concave, and that it is a function that depends only on $d_1$. This allows the existence and uniqueness of fixed points in the $k$-level system to be proved in the same manner as the $3$-level case. The proofs of convergence to the fixed points are also analogous to the $3$-level case.
\end{subequations}
\hfill\\

\section{Conclusion}
We have presented a model of virus propagation that is applicable to many real-world phenomena, such as the spread of diseases in regions containing one major population center surrounded by numerous dependent areas, and explored extensions and generalizations of it to various classes of starlike graphs. Taking the initial star graph model introduced by \cite{bgkmrs}, we see that with some additional work, it is possible to extend this to $3$-level starlike graphs with the simplifying condition that all nodes have the same number of neighbors on each level. With an additional concavity argument, it was not difficult to extend our model on $3$-level starlike graphs to starlike graphs with an arbitrary number of levels, as long as the simplifying assumption is kept.

An even more realistic model can be imagined as a mesh network of multiple interconnected starlike subgraphs. For example, returning to the example of disease spread, such a model would be able to depict the spread of disease in regions with multiple major population centers, such as the Boston-Washington D.C. megalopolis, and numerous dependent areas in between them. However, such a generalization would require giving up our simplifying assumption, making an analysis challenging. There is still much work to do in this regard.

Another interesting topic of study would be to explore the paths that iterates take when converging to the fixed point. \cite{bgkmrs} conjectured for a $2$-level starlike graph that points that are inside different regions would exhibit different behaviors when iterating to the fixed points. It would be interesting to find simple conditions to support this possibility, and extend the analysis to higher level starlike graphs.

Finally, as the SIS model is a simple model for virus transmission, there are some infectious diseases that may not be adequately modeled using our model. Exploring extensions to our model to account for other properties of infectious diseases, such as changing transmission probabilities with time, multiple infected classes, an exposed class and temporary immunity would make it more useful in applications to epidemiology.
\medskip

\end{document}